\documentclass[12pt,reqno]{amsart} 
\usepackage[utf8]{inputenc}
\usepackage{amsthm,amsmath,amsfonts,amssymb}

\usepackage{graphicx}
\usepackage{color}
\usepackage[dvipsnames]{xcolor}
\usepackage{subcaption}
\usepackage{hyperref}
\usepackage{url}
\usepackage{mathtools}
\usepackage{enumerate}
\usepackage{array}
\usepackage{paralist}

\newtheorem{theorem}{Theorem}[section]
\newtheorem{corollary}[theorem]{Corollary}

\newtheorem{lemma}[theorem]{Lemma}

\theoremstyle{remark}
\newtheorem{remark}[theorem]{Remark}
 
\numberwithin{equation}{section}

\def\R{\ensuremath{I\!\!R}}
\def\Rp{\ensuremath{\R_{> 0}}}

\DeclareMathOperator\diag{diag}
\DeclareMathOperator\sign{sign}
\DeclareMathOperator\rank{rank}

\def\cal#1{\mathcal{#1}}
\def\Vh{\hat V}
\def\Om{\Omega}
\newcommand{\C}{\mathbb{C}}

\newcommand{\N}{{\mathbb N}}
\newcommand{\reff}[1]{(\ref{#1})}
\newcommand{\bpm}{\begin{pmatrix}}\newcommand{\epm}{\end{pmatrix}}
\newcommand{\spr}[1]{\langle #1 \rangle}

\def\oc{\overline{c}}

\def\wt{\widetilde}
\def\tew{\textwidth}
\def\pa{{\partial}}
\def\pdep{{\tt pde2path}}
\def\eps{\varepsilon}
\def\del{\delta}
\def\ddt{\frac{\rm d}{{\rm d}t}}
\def\dd{\, {\rm d}}
\def\CO{{\cal O}}
\def\vt{\vartheta}
\def\mlab{{\tt Matlab}}
\def\kap{\kappa}
\def\er{{\rm e}}
\def\ri{{\rm i}}
\def\re{{\rm Re}}\def\im{{\rm Im}}

\allowdisplaybreaks

\begin{document} 
\title[Monomial steady state parameterizations and spatial instabilities]
{Conditions for spatial instabilities and pattern formation from 
monomial steady state parameterizations}

\author{Carsten Conradi}
\address{Hochschule für  Technik und Wirtschaft\\ Berlin, Germany}
\email{carsten.conradi@htw-berlin.de}

\author{Maya Mincheva}
\address{Department of Mathematical Sciences, Northern Illinois University\\ DeKalb, IL, USA}
\email{mmincheva@niu.edu}

\author{Hannes Uecker}
\address{Institut für Mathematik, Carl von Ossietzky Universität Oldenburg\\ Germany} 
\email{hannes.uecker@uni-oldenburg.de}

\date{\today}

\begin{abstract}
  We study the onset of spatial instabilities in reaction networks
  where the spatially homogeneous system admits a steady state
  parameterization. We formulate a sufficient condition -- based on the 
  signs of the constant and leading coefficients of the characteristic
  polynomial of the linearized Jacobian scaled by the diffusion
  coefficients -- that guarantees a Turing-like instability to spatially
  inhomogeneous solutions on appropriately chosen domains $\Omega$. We
  also present a specific condition on the domain size $|\Omega|$
  required to trigger this instability. As a consequence of employing
  a monomial parameterization, these conditions take the form of
  algebraic polynomial inequalities involving only rate constants and
  diffusion coefficients. We apply these ideas to a network describing
  the sequential and distributive (de-)phosphorylation of a protein at
  two binding sites, ultimately deriving a condition involving only
  the four catalytic constants of the enzymes and the diffusion
  coefficients of the four enzyme-substrate complexes that guarantees
  a Turing-like instability.

  \noindent
  \underline{Keywords:}
  Chemical Reaction Network, Reaction Diffusion Equations, Pattern
  Formation

  \noindent
  \underline{MSC classification:}
  35B36, 35K57, 92B05, 92C42
\end{abstract}

\maketitle


\section{Introduction}

The emergence of
spatial patterns in biochemical networks is an active area of
research, see e.g.\ \cite{Krishnan2020,Menon2021}.
A seminal mechanism for pattern formation in reaction--diffusion systems 
has been introduced by Alan Turing in \cite{Turing1952} 
(c.f.\ for example, \cite{diff-010} and the
contributions \cite{diff-012,Krause2021} on the occasion of the 
70th anniversary of the paper). 
The mechanism consists of a reaction
system of two interacting chemicals. Two scenarios are considered: (i) the
homogeneous, well mixed case when diffusion is negligible and the
dynamics of the species can be described by a pair of ODEs; and
(ii), the inhomogeneous case when diffusion cannot be neglected and
the dynamics is described by a pair of reaction-diffusion PDEs. In
scenario (i) the steady state of this ODE system is 'stable' (i.e.\
all eigenvalues of its linearization have a negative real part). In
scenario (ii) this steady state corresponds to a homogeneous,
time-constant solution (a homogeneous steady state) of the PDE. In
Turing's example the diffusion coefficients are chosen in such a way
that this homogeneous steady state of the PDE is unstable (i.e.\ at
least one eigenvalue of its linearization is positive) and a
non-homogeneous time-constant solution emerges nearby. In this
setting the non-homogeneous steady state represents a pattern that
emerges when the homogeneous steady state is de-stabilized by
diffusion.

Given a reaction system with fixed rate constants yielding
a stable steady state of the ODE system, this 'de-stabilizing' only 
happens for selected values of the diffusion coefficients. Finding
such values for the diffusion coefficients is straightforward in the
two species case, see 
e.g. \cite{jost2013partial,jostmathematical,murray2001mathematicalII},
but can be challenging if the number of species increases. For an
arbitrary number of species only a limited number of results
exists, for example,~\cite{Satnoianu2005,Satnoianu2000}, and,
\cite{VSC23,VillarSepulveda2025}. In \cite{Satnoianu2000} a
necessary and a sufficient condition for a Turing Instability are
given in terms of the signs of the principal minors of the Jacobian
of the ODE system. Every principal minor corresponds to what the
authors call a subsystem and if the minor is of a certain sign, then
the subsystem is called \lq unstable\rq. The approach presented
in~\cite{VSC23} further extends the idea of unstable (sub-)systems:
given a subsystem of size $p$, the authors show that by suitably
choosing the diffusion coefficients and the domain one can force
$p$ eigenvalues of the linearization to approach the eigenvalues of
the subsystem while the remaining eigenvalues of the linearization
approach $-\infty$. And \cite{VillarSepulveda2025} allows for
cross-diffusion and exploits these ideas to designing diffusion
coefficients that enable instabilities.

Here we consider reaction networks that admit a monomial
parametrization with any number of species, their ODEs and the
corresponding reaction-diffusion PDEs on spatial domains of
dimension 1, 2 and 3. Networks that admit a monomial
parametrization are a generalization of the networks described in
\cite{PerezMillan2012} and have been introduced in
\cite{alg-043}. They form a special class of reaction networks, where
all positive steady states can be  parameterized. We first focus on
the existence of at least one positive real eigenvalue of the
linearization of the reaction-diffusion PDE. To this end we
analyze a polynomial representation of the determinant of the
linearization and formulate inequality conditions on the sign of its
leading and constant coefficient (inequalities~(\ref{eq:condi_a0})
\& (\ref{eq:condi_as})).  Given a monomial parametrization of all
positive steady states, these inequalities represent polynomial
conditions that are sufficient for the linearization to have at
least one positive eigenvalue. 
Based on this inequality condition we formulate conditions on the
domain (in 1-, 2- and 3-D) that are  sufficient for the existence of
such positive eigenvalues~(Theorem~\ref{theo:existence_mu_k}). If the
rate constants have been chosen in such a way that the steady state of
the ODE is stable, then an instability occurs
(Corollary~\ref{coro:cond-turing-inst}). We call the corresponding
instability \emph{Turing-like} for the following reason: 
Usually the term \emph{Turing instability} refers to
instabilities with respect to patterns with wavelengths independent
of the (asymptotically large) domain size. We, however, present
conditions guaranteeing that instabilities will always arise for the
lowest eigenvalues of the scalar Neumann Laplacian on $\Om$. Due to
this reduction we do not 
consider all types of instabilities, but {\em long wave} (aka {\em
  sideband}) types: the first instability is always with respect to
nonhomogeneous solutions of maximal wavelength allowed by the domain
(cf.~Remark~\ref{numrem} and \S\ref{numsec} for illustration by dispersion 
relations). 

Our approach is closely related to that of \cite{Satnoianu2000}: both 
are based on the fact that the determinant of a matrix equals the
product of all its eigenvalues and that consequently a change of the
sign of the determinant   indicates an odd number of real eigenvalues
crossing the imaginary axis. The authors of~\cite{Satnoianu2000}
exploit the fact that the determinant of the linearization can be
expressed in terms of sums of products of principal minors of the
Jacobian of the ODE system (weighted by products of diffusion
coefficients) and that stable and unstable subsystems lead to terms
with opposite signs. Hence the existence of at least one unstable
subsystem allows the determinant to change sign.

We, on the other hand, analytically compute the determinant of
the linearization evaluated at all homogeneous solutions
(with the help of the monomial parametrization). This
yields a polynomial that has coefficients that are rational in the
rate constants and diffusion coefficients.  Our inequalities 
(\ref{eq:condi_a0}) \&  (\ref{eq:condi_as}) then
guarantee that this polynomial changes sign (at least once).

As an application we consider the network from \cite{maya-bistab}. It
describes the double (de)phosphorylation of a protein $S$ by a kinase
$K$ and a phosphatase $F$ according to the following mechanism ($S_0$,
$S_1$ and $S_2$ refer to un-, mono-, and double-phosphorylated
substrate):
\begin{equation}
  \label{mapk}
  \begin{split}
    S_0 + K   &\xrightleftharpoons[k_{2}]{k_{1}} S_0K \xrightarrow{k_3}
     S_1 +K\xrightleftharpoons[k_{5}]{k_{4}} S_1 K\xrightarrow{k_6}
     S_2 +K,\\ 
     S_2+ F &\xrightleftharpoons[k_{8}]{k_{7}} S_2F
     \xrightarrow{k_9} S_1 +F\xrightleftharpoons[k_{11}]{k_{10}} S_1
     F\xrightarrow{k_{12}} S_0 +F\ .
  \end{split}
\end{equation}
Here $k_1$, \ldots, $k_{12}$ denote the reaction rate constants. There
are nine species, cf.\,(\ref{eq:variables_dd}) and three conserved 
quantities: the total amount of kinase, phosphatase and substrate.
A reaction-diffusion PDE derived from this network additionally requires 
nine diffusion coefficients. Our analysis shows that the catalytic
constants of the kinase with substrates $S_0$ ($k_3$), and $S_1$
($k_6$) as well as those of the phosphatase with substrates $S_2$
($k_9$), and $S_1$ ($k_{12}$) together with the diffusion 
coefficients of the enzyme substrate complexes $S_0K$ ($d_3$), $S_1K$
($d_5$), $S_2F$ ($d_8$) and $S_1 F$ ($d_9$) enable spatial 
instabilities in the following sense: as a consequence of
Theorem~\ref{theo:existence_mu_k} and
Corollary~\ref{coro:cond-turing-inst} we show here that if the
catalytic constants and the diffusion coefficients $d_3$, $d_5$, $d_8$
and $d_9$ are such that
\begin{equation}
  \label{eq:kc_Km_d_ineq}
  \frac{k_3\, k_9}{k_6\,
    k_{12}} < \frac{d_3 d_8}{d_5 d_9},
\end{equation}
then the linearization of the reaction diffusion PDEs derived from 
network~(\ref{mapk}) has at least one positive real eigenvalue -- for 
an appropriately chosen spatial domain. 
And if the corresponding steady state of the ODEs derived from
network~(\ref{mapk}) is stable, then a Touring-like instability
occurs. To illustrate our results we choose parameter values
satisfying~(\ref{eq:kc_Km_d_ineq}) and proceed with a detailed
numerical analysis on a 1D- and a 2D-domain; the $k_i$- and
$d_i$-values are given in Section~\ref{sec:Turing_dd}. On both
domains we find several non-homogeneous solutions and examine their
local stability by numerical simulations. 

The problem of genuine Turing instabilities for \reff{mapk}
remains open, as our conditions will always lead to instabilities
for the lowest eigenvalues of the Neumann Laplacian. 
Moreover, our analysis is based on matrix determinants,
i.e., on the products  of eigenvalues, and hence ignores
instabilities due to complex conjugate pairs of eigenvalues
crossing the  imaginary axis.  Hence, we cannot detect pairs of
complex-conjugate eigenvalues crossing.  Interestingly, when
examining the system~(\ref{mapk}) numerically, we found Hopf
bifurcations nonetheless. This finding is interesting in its own
right, in particular as the existence of Hopf bifurcations in the
ODE model of the system~(\ref{mapk}) is still an open
question~\cite{Conradi2020,feliu2024networkreductionabsencehopf}.
Such Hopf instabilities and associated bifurcations of time--periodic
solutions  do play a role in certain parameter regimes for the
system~(\ref{mapk}) (c.f.\ \S\ref{numsec}, where we investigate
standing wave solutions arising in a Hopf-bifurcation). Thus, while
our analysis is helpful to identify some sufficient conditions for
instabilities, which are also useful as guidance for the numerics  in
\S\ref{numsec}, it is only a first step towards a more complete
picture of pattern formation in the system~(\ref{mapk}). 
  
The paper is organized as follows: in \S~\ref{sec:MA-networks} we
introduce the notation, the dynamical systems defined by mass action
networks (ODEs and reaction-diffusion PDEs) and mass action networks
that admit a monomial parametrization. In
\S\ref{sec:turing-instabilities} we derive conditions for the
occurrence of spatial instabilities, in \S\ref{numsec} we apply our
results to network~(\ref{mapk}), and in Section~\ref{dsec} we briefly
summarize our results.

\section{The mass action network}
\label{sec:MA-networks}

See  for example \cite{Feinberg} for an in-depth introduction to 
chemical reaction networks with mass action kinetics. We use $c$ to
denote concentrations of chemical species, $\Gamma$ to denote the
stoichiometric matrix defined by the network, and $r(k,c)$ to denote
the reaction rates. Assuming $n$ chemical species and $r$ reactions,
$\Gamma$ is a $n\times r $ matrix, $\Gamma\in\R^{n\times r}$.
The reaction rates are given by a function $r:\R^r\times\R^n \to \R^r$
that depends on the (positive) rate constant vector $k\in\R^r$ and the
concentrations. For mass action networks it is a vector valued
function with monomial entries.

For network~(\ref{mapk}), we use 
\begin{align}
  \notag
  c_1 &= [S_0], & c_2 &= [K], & c_3 &= [S_0\, K], \\
  \label{eq:variables_dd}
  c_4 &= [S_1] & c_5 &= [S_1\, K] & c_6 &= [S_2]
  \\
  \notag
  c_7 &= [F] & c_8 &= [S_2\, F] & c_9 &= [S_1\,
  F]
\end{align}
to denote the species concentrations, under the assumption of 
mass action kinetics, one obtains the
following reaction rates for the network~(\ref{mapk}), 
\begin{subequations}
  \begin{align*}
    r_1(k,c) &= k_1\, c_1\, c_2, & 
    r_2(k,c) &= k_{2} c_{3}, &
    r_3(k,c) &= k_{3} c_{3},& r_4(k,c) &= k_{4} c_{2} c_{4},\\ 
    r_5(k,c) &= k_{5} c_{5}, &
    r_6(k,c) &= k_{6} c_{5}, &
    r_7(k,c) &= k_{7} c_{6} c_{7}, &
    r_8(k,c) &= k_{8} c_{8}, \\
    r_9(k,c) &= k_{9} c_{8},& 
    r_{10}(k,c) &= k_{10} c_{4} c_{7}, &
    r_{11}(k,c) &= k_{11} c_{9}, &
    r_{12}(k,c) &= k_{12} c_{9}, 
  \end{align*}
\end{subequations}
and the vector of reaction rates
\begin{equation}
  \label{eq:def_r}
  r(k,c) = \left(r_1(k,c),\, \ldots, r_{12}(k,c)\right)^T\ .
\end{equation}
In the ordering of (\ref{eq:variables_dd}) one obtains for the
stoichiometric matrix:
\begin{equation}
  \label{eq:Gam_dd}
  \Gamma = \left[
    \begin{array}{rrrrrrrrrrrr}
      -1 & 1 & 0 & 0 & 0 & 0 & 0 & 0 & 0 & 0 & 0 & 1 \\
      -1 & 1 & 1 & -1 & 1 & 1 & 0 & 0 & 0 & 0 & 0 & 0 \\
      1 & -1 & -1 & 0 & 0 & 0 & 0 & 0 & 0 & 0 & 0 & 0 \\
      0 & 0 & 1 & -1 & 1 & 0 & 0 & 0 & 1 & -1 & 1 & 0 \\
      0 & 0 & 0 & 1 & -1 & -1 & 0 & 0 & 0 & 0 & 0 & 0 \\
      0 & 0 & 0 & 0 & 0 & 1 & -1 & 1 & 0 & 0 & 0 & 0 \\
      0 & 0 & 0 & 0 & 0 & 0 & -1 & 1 & 1 & -1 & 1 & 1 \\
      0 & 0 & 0 & 0 & 0 & 0 & 1 & -1 & -1 & 0 & 0 & 0 \\
      0 & 0 & 0 & 0 & 0 & 0 & 0 & 0 & 0 & 1 & -1 & -1
    \end{array}
  \right]. 
\end{equation}

\subsection{The ODEs and PDEs derived from the mass action network}
\label{sec:odes-pdes-MA-network}

Let~$t$ denote time. For network~(\ref{mapk}), the ODEs derived from
network~(\ref{mapk}) in the well mixed spatially homogeneous scenario
read
\begin{subequations}\label{mapk-sys}
  \begin{align}
    \label{eq:ma_ode_1}
    \dot c_1 &= -c_{1} (k_{1} c_{2}) + k_{2} c_{3} + k_{12} c_{9}, \\
    \dot c_2 &= -c_{2} (k_{1} c_{1} + k_{4} c_{4}) + k_{2} c_{3} +
    k_{3} c_{3} + (k_{5}+k_{6}) c_{5}, \\
       \label{eq:ma_ode_3}
    \dot c_3 &=  -c_{3} (k_{2}+k_{3}) + k_{1} c_{1} c_{2}, \\
       \label{eq:ma_ode_4}
    \dot c_4 &= -c_{4} (k_{4} c_{2}+k_{10} c_{7}) + k_{3} c_{3} +
    k_{5} c_{5} + k_{9} c_{8} + k_{11} c_{9}, \\
       \label{eq:ma_ode_5}
    \dot c_5 &= -c_{5} (k_{5}+k_{6}) +  k_{4} c_{2} c_{4}, \\
    \dot c_6 &= -c_6 (k_{7} c_{7}) + k_{6} c_{5} + k_{8} c_{8}, \\
    \dot c_7 &= -c_7 (k_{10} c_{4}+k_{7} c_{6}) + (k_{8}  + k_{9})
    c_{8} + (k_{11}+k_{12}) c_{9}, \\
       \label{eq:ma_ode_8}
    \dot c_8 &= -c_8 (k_{8}+k_{9}) + k_{7} c_{6} c_{7}, \\
    \label{eq:ma_ode_9}
    \dot c_9 &= -c_{9}(k_{11}+k_{12}) + k_{10} c_{4} c_{7}, 
  \end{align}
\end{subequations}
which together with initial conditions $0<c_0\in\R^9$ we abreviate as 
\begin{equation}
  \label{eq:crn_ode}
  \dot c = \Gamma r(k,c), \; c(0) = c_0 \text{ with } c_0>0\ .
\end{equation}
Since $\Gamma r(k,c)$ is locally Lipschitz (for every
positive $k$), by standard results from ODE theory there exists 
a time $T>0$ (possibly infinite) and a unique solution 
$ c\in C^1([0,T],\Rp^n)$. If the matrix $\Gamma$ does not have full
row rank, i.e.\ if $\rank(\Gamma)=s<n$, then there exists a matrix $Z$
of maximal rank $n-s$ such that $Z^{'}\, \Gamma \equiv 0$. In this
case, solutions $c(t)$ with initial value $c(0) = c_0$ satisfy
\begin{displaymath}
  Z^{'} c(t) = Z^{'}c_0, 
\end{displaymath}
and the constant quantities $Z^{'}c_0$ are often referred to as total
amounts, or total concentrations, total masses. 

For network~(\ref{mapk}) and $\Gamma$ as in (\ref{eq:Gam_dd}) one
finds $\rank(\Gamma)=6$ and hence there exist $9\times 3$-matrices~$Z$
such that $Z^{'}\, \Gamma \equiv 0$, for example the matrix
\begin{displaymath}
  Z^{'} = \left[
    \begin{array}{*9{c}}
      0 & 1 & 1 & 0 & 1 & 0 & 0 & 0 & 0 \\
      0 & 0 & 0 & 0 & 0 & 0 & 1 & 1 & 1 \\
      1 & 0 & 1 & 1 & 1 & 1 & 0 & 1 & 1
    \end{array}
  \right]\ .
\end{displaymath}

\begin{remark}
  If $\rank(\Gamma)=n-s$, then the Jacobian $J(c)$ of
  the function~$\Gamma r(k,x)$ of a mass action network has at least $s$ 
  eigenvalues zero. Given a vector $k$ and a steady state $c$ we say
  that $c$ is stable, if all eigenvalues that are not identically zero
  have negative real part.
\end{remark}

In the spatially inhomogeneous scenario, the variables $c$ are defined
by solutions of the partial differential equation (PDE) 
\begin{subequations}
  \label{eq:pde}
  \begin{align}
    &\pa_t c= \Gamma r(k,x) + D\, \Delta c, 
      \intertext{
      where~$D$ is a diagonal matrix containing the
      positive diffusion coefficients and $\Delta$ is the element-wise 
      (negative) Laplace
      operator, i.e.\ $\Delta c_i =(\pa^2_{x_1} +
      \dots + \pa_{x_d}^2)c_i$, and the PDE is posed on some domain 
      $\Om$ with  (piecewise) smooth boundary $\partial \Omega$ and 
      Neumann (aka no-flux) boundary conditions 
      }
    &\pa_\nu c|_{x\in\partial \Omega} = 0, 
      \intertext{where $\nu$ is the outward unit normal and $\pa_\nu
      c$ the directional derivative, and with initial conditions,}
    &c|_{t_0}=c_0:\Om\to \R_+^9. 
  \end{align}
\end{subequations}
Though weaker notions of solution are possible, here solutions 
 of \reff{eq:pde} are functions 
 \begin{displaymath}
  c : [0,T] \times \Omega \to \Rp^n, (t,z) \mapsto (c_1(t,z), \ldots,
  c_n(t,z))^{'}\ , 
\end{displaymath}
at least once differentiable in $t$ and twice in $x$. Naturally, 
any solution $c:[0,T]\to\R_+^n$ of \reff{eq:crn_ode} satisfies the 
boundary conditions (\reff{eq:pde}b) and hence can be identified 
with a spatially homogeneous solution of \reff{eq:pde} to initial 
conditions $c_0(x)\equiv c_0$. 
We are interested in steady solutions $c^*$ of \reff{eq:crn_ode} and
\reff{eq:pde}. Such steady states are called stable if (in the ODE
case) for every $\eps>0$ there exists a $\delta>0$ such that
$\|c(0)-c^*\|_{\R^9}<\del$ implies that the solution $c(t)$ exist for
all $t>0$ and $\|c(t)-c^*\|_{\R^9}<\eps$ for all $t>0$. Otherwise
$c^*$ is called unstable. For the PDE the notion of stability in
principle depends on the chosen norms, but we can define
$c^*:\Om\to\R_+^9$ to be stable if
$\|c(0,\cdot)-c^*(\cdot)\|_\infty<\del$ implies that $c(t,\cdot)$
exists for all $t>0$ and $\|c(t,\cdot)-c^*(\cdot)\|_\infty<\eps$ 
for all $t>0$. As explained above, we are in particular interested 
in the situation where a steady state $c^*\in\R^9$ is stable in the 
ODE, but the associated function $c^*(\cdot)\equiv c^*$ is unstable 
in the PDE, i.e., unstable wrt spatially inhomogeneous perturbations. 

\subsection{Mass action networks that admit a monomial parametrization} 
\label{sec:mono-para}
If a mass action network admits a monomial parametrization, then
by~\cite[Lemma~3.5]{alg-043} there exists a set
$\mathcal{K}\subseteq\Rp^r$, a function $\psi : \mathcal{K} \to \R^n$
and a matrix $A\in\R^{p\times n}$ such that 
\begin{equation}
  \label{eq:equi_mono_para}
  \Gamma r(k,c) = 0 \Leftrightarrow k\in \mathcal{K} \text{ and } c = \psi(k)
  \circ \xi^A \text{ for some $0<\xi\in\R^p$ and $A\in\R^{n\times p}$}, 
\end{equation}
where $\circ$ denotes the element-wise product of two vectors, and 
$\displaystyle (\xi^A)_j=\prod_{i=1}^p \xi_i^{A_{ij}}$. 
One consequence of this equivalence is that for every $k\in\mathcal{K}$,
one obtains infinitely many positive steady states (for that particular~$k$)
parameterized by~$\xi$. Many signal transduction networks admit a
monomial parametrization, for example the networks studied in
\cite{Conradi2019,Conradi2015,Holstein2013,millan2015mapk,PerezMillan2012}
and some of the MESSI systems discussed in \cite{PerezMillan2018}. 
For an in-depth discussion see~\cite{alg-043}.

\subsubsection{A monomial parametrization for network~(\ref{mapk})}

By setting $(c_1,c_2,c_7)=(\xi_1,\xi_2,\xi_7)$ and setting
(\ref{eq:ma_ode_1}) -- (\ref{eq:ma_ode_9}) to zero and solving for
$c_3$, $c_4$, $c_5$, $c_6$, $c_8$, $c_9$ we obtain the following
parametrization of steady states for network~(\ref{mapk}):
\begin{subequations}
  \begin{equation}
    (\xi_1,\xi_2,\xi_3) \mapsto (c_1,\ldots, c_9)
  \end{equation}
  with
  \begin{align}
    \notag
    c_1 &= \xi_{1},\;
          c_2 = \xi_{2},\;
          c_3 =
          \frac{k_{1}}{k_{2}+k_{3}} \xi_{1} \xi_{2},\;
          c_4 = \frac{k_{1} k_{3} (k_{11}+k_{12})}{k_{10} k_{12}  
          (k_{2}+k_{3})} \frac{\xi_{1} \xi_{2}}{\xi_{3}}, \\
    \label{eq:mono_para}
    c_5&= \frac{k_{1} k_{3} k_{4} (k_{11}+k_{12})}{k_{10} k_{12}
         (k_{2}+k_{3}) (k_{5}+k_{6})} \frac{\xi_{1} \xi_{2}^2}{ \xi_{3}},\quad
         c_6 = \frac{k_{1} k_{3} k_{4} k_{6} (k_{11}+k_{12})
         (k_{8}+k_{9})}{k_{10} k_{12} k_{7} k_{9} (k_{2}+k_{3})
         (k_{5}+k_{6})} \frac{\xi_{1} \xi_{2}^2}{\xi_{3}^2}, \\
    \notag
    c_7 &= \xi_{3},\; 
          c_8 = \frac{k_{1} k_{3} k_{4} k_{6}  (k_{11}+k_{12})}{k_{10} k_{12}
          k_{9} (k_{2}+k_{3}) (k_{5}+k_{6})} \frac{\xi_{1}
          \xi_{2}^2}{\xi_{3}},\;
          c_9 = \frac{k_{1} k_{3} }{k_{12} (k_{2}+k_{3})}
          \xi_{1} \xi_{2}\ .
  \end{align}
\end{subequations}
In this case the monomial parametrization is given by the following
$\psi(k)$ and~$A$: 
\begin{displaymath}
  \begin{split}
    \psi(k)= \biggl(&
    1,1,\frac{k_{1}}{k_{2}+k_{3}}, \frac{k_{1} k_{3}
      (k_{11}+k_{12})}{k_{10} k_{12} (k_{2}+k_{3})},
    \frac{k_{1} k_{3} k_{4} (k_{11}+k_{12})}{k_{10} k_{12} (k_{2}+k_{3})
      (k_{5}+k_{6})}, \\
    & 
    \frac{k_{1} k_{3} k_{4} k_{6} (k_{11}+k_{12}) (k_{8}+k_{9})}{k_{10}
      k_{12} k_{7} k_{9} (k_{2}+k_{3}) (k_{5}+k_{6})}, 1,
    \frac{k_{1} k_{3} k_{4} k_{6}  (k_{11}+k_{12})}{k_{10} k_{12}
      k_{9} (k_{2}+k_{3}) (k_{5}+k_{6})},
    \frac{k_{1} k_{3} }{k_{12} (k_{2}+k_{3})}
    \biggr), 
  \end{split}
\end{displaymath}
and
\begin{displaymath}
  A = \left[
    \begin{array}{*9{r}}
      1 & 0 & 1 & 1 & 1 & 1 & 0 & 1 & 1 \\
      0 & 1 & 1 & 1 & 2 & 2 & 0 & 2 & 1 \\
      0 & 0 & 0 & -1 & -1 & -2 & 1 & -1 & 0
    \end{array}
  \right]\ .
\end{displaymath}

\section{Turing--like instabilities}
\label{sec:turing-instabilities}

As described above, Turing's approach to pattern formation consists of
two ingredients: a spatially homogeneous ODE of the
form~(\ref{eq:crn_ode}) and a reaction-diffusion PDE of the
form~(\ref{eq:pde}), where the ODE has a stable steady state. This
also defines a spatially homogeneous steady state solution of the PDE,
and if the diffusion coefficients are such that this homogeneous
steady state is unstable, then one speaks of Turing (--like)
instability.

Throughout this section we will assume that our system admits a
monomial parametrization of some positive steady states $\bar c$,
suppressing the dependency on the~$k_i$ and~$\xi_i$ for notational
simplicity. We will use the symbol~$J(\bar c)$ to denote the Jacobian
of $\Gamma r(k,c)$ evaluated at $\oc$, again suppressing the
$k_i$-dependency for simplicity. We assume that $\oc$ is (neutrally)
stable in the ODE, i.e., that all nonzero eigenvalues of $J(\oc)$ 
have negative real parts and aim to identify conditions which yield
that $\oc$ is unstable in the PDE. For that we use the MP to derive
the condition \reff{eq:kc_Km_d_ineq} after reducing the vector valued
linearized problem to the lowest eigenvalues of the scalar Neumann
Laplacian on $\Om$.

\subsection{Eigenvalues and determinants}
\label{sec:Turing_general} 

To identify Turing--like instabilities, the PDE~(\ref{eq:pde}) is
linearized at the spatially homogeneous steady state $\bar c$. 
Introducing $w(z,t) = c(z,t) - \bar c$,  the local (i.e., linearized) 
dynamics are described by the linear PDE
\begin{equation}
  \label{lpde}
  \dot w = J(\bar c) w + D \Delta w,
\end{equation}
with Neumann BCs for $w$, and initial condition $w(0,z)= w_0(z)$. 
The stability of $w\equiv 0$ and hence of the homogeneous steady state
$\bar c$ is then determined by the eigenvalues of the matrices
\begin{equation}
  \label{eq:def_Ak}
  A_{\ell} = J(\bar c) - \mu_{\ell} D,\; \ell = 0, 1, 2, \ldots
\end{equation}
where $\mu_{\ell}$ are the eigenvalues of the Neumann Laplacian~$-\Delta$
on the domain~$\Omega$. In a first step we neglect the fact that
$A_{\ell}$ is defined for the eigenvalues of $-\Delta$ and consider
the matrix function 
\begin{equation}
  \label{eq:def_A_mu}
  A(\mu) : \R \to \R^{n\times n},\; \mu \mapsto J(\bar c)- \mu D\ .
\end{equation}
In a second step we identify values~$\bar \mu$ for which the
determinant~$\det(A(\mu))$ has certain properties, and in a third step
we combine this with conditions on the domain $\Omega$ that guarantee
that there exist eigenvalues~$\mu_{\ell}$ of~$-\Delta$ on~$\Omega$
with $0<\mu_{\ell}<\bar\mu$.

\begin{lemma}
  \label{lemma:det_JD_not_zero}
  Let $A(\mu)$ be as in (\ref{eq:def_A_mu}) where $J(\bar c)$ and $D$
  are matrices of dimension $n\times n$ and satisfy $\rank(J(\bar
  c))=s<n$ and $D=\diag(d_1,\ldots,d_n)$ with $d_i>0$. 
  Then
  \begin{enumerate}[{(}i{)}]
  \item\label{item:A_factors}
    With $I_n$ is the $n\times n$-identity matrix, the determinant
    $\det(A(\mu))$ factors as  
    \begin{displaymath}
      \det(A(\mu)) = \det(D)\, \det(J(\bar c) D^{-1} - \mu I_n). 
    \end{displaymath}
  \item\label{item:A_char_poly_JDinv} 
    The determinant~$\det(A(\mu))$ is equal to the characteristic
    polynomial of the matrix $J(\bar c) D^{-1}$, up to the positive
    constant $\det(D)$.
  \item \label{item:A_not_zero}
    The polynomial~$\det(A(\mu))$ is not the zero polynomial.
  \end{enumerate}
\end{lemma}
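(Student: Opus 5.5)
The plan is a direct linear-algebra argument in three steps, one for each item of Lemma~\ref{lemma:det_JD_not_zero}.

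For (\ref{item:A_factors}), note that $D=\diag(d_1,\ldots,d_n)$ with all $d_i>0$ is invertible. We can therefore write
\begin{displaymath}
  A(\mu) = J(\bar c) - \mu D = \bigl(J(\bar c)D^{-1} - \mu I_n\bigr) D .
\end{displaymath}
Multiplicativity of the determinant then gives $\det(A(\mu)) = \det(J(\bar c)D^{-1}-\mu I_n)\det(D)$. Since scalars commute, this is the claimed factorization. Here $\det(D)=\prod_i d_i>0$.

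For (\ref{item:A_char_poly_JDinv}), the factor $\det(J(\bar c)D^{-1}-\mu I_n)$ is, up to the sign convention $(-1)^n$, the characteristic polynomial $\det(\mu I_n - J(\bar c)D^{-1})$ of $J(\bar c)D^{-1}$. So $\det(A(\mu))$ equals this characteristic polynomial multiplied by the positive constant $\det(D)$. If the paper's convention is $\det(\mu I_n - M)$, I will note that the sign $(-1)^n$ is absorbed harmlessly, since it does not affect the roots or the claim that the factor is the characteristic polynomial.

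For (\ref{item:A_not_zero}), I expand the factor as a polynomial in $\mu$. Its leading term is $(-\mu)^n$, because the characteristic polynomial of any $n\times n$ matrix is monic up to sign. Multiplying by $\det(D)\neq 0$ shows that $\det(A(\mu))$ has leading coefficient $(-1)^n\det(D)\neq 0$, so it is not the zero polynomial. Equivalently, the coefficient of $\mu^n$ in $\det(J(\bar c)-\mu D)$ is $(-1)^n\prod_i d_i$. The rank hypothesis $\rank(J(\bar c))=s<n$ is not needed for this step. It only implies that $\det(A(0))=\det(J(\bar c))=0$, i.e.\ the constant coefficient vanishes; I would remark on this, since it is presumably why the lemma is stated, namely that the lowest nonzero coefficient is relevant later.

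I do not expect any real obstacle. The only point requiring care is keeping the sign convention for the characteristic polynomial consistent with what is used later for the coefficients in (\ref{eq:condi_a0}) and (\ref{eq:condi_as}).
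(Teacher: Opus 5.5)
Your proof is correct. Items (i) and (ii) coincide with the paper's argument: factor $D$ out on the right and use multiplicativity of the determinant. The paper's convention is $\det(M-\mu I_n)$, so no sign has to be absorbed.

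For (iii) you take a different route. You read off the coefficient of $\mu^n$, namely $(-1)^n\det(D)=(-1)^n\prod_i d_i\neq 0$. The paper instead observes that right multiplication by the diagonal matrix $D^{-1}$ rescales columns and hence preserves rank. It deduces $\rank(J(\bar c)D^{-1})=\rank(J(\bar c))=s>0$ and concludes from this that the characteristic polynomial is nonzero.

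Your argument is more elementary and more general. It needs neither the rank hypothesis nor $s>0$, which the paper invokes ``by assumption'' although the lemma only states $s<n$. It also makes clear that (iii) holds for every $J(\bar c)$, since a characteristic polynomial is never the zero polynomial; the rank information is not what drives (iii).

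What the paper's rank observation does buy is the structure used immediately afterwards in (\ref{eq:CharPoly_JD}). The coefficient of $\mu^j$ in $\det(J(\bar c)D^{-1}-\mu I_n)$ is $(-1)^j$ times the sum of the principal minors of order $n-j$ of $J(\bar c)D^{-1}$. These minors vanish for $n-j>s$ because the matrix has rank $s$. Hence $\mu^{n-s}$ divides $\det(A(\mu))$, which sharpens your side remark that only the constant coefficient vanishes.
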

\begin{proof}
  (\ref{item:A_factors}) follows by a simple computation and the
  fact that $D$ is a diagonal matrix with positive diagonal
  elements. (\ref{item:A_char_poly_JDinv}) follows directly from
  (\ref{item:A_factors}). Concerning (\ref{item:A_not_zero}), 
  we recall that right multiplication of a matrix with a diagonal
  matrix results in a rescaling of its column vectors. Hence right
  multiplication with such a diagonal matrix does not change its number of
  linear independent columns and hence its rank. Thus, $\rank(J(\bar
  c) D^{-1}) = \rank(J(\bar c))=s>0$ by assumption. Hence, in
  particular, the characteristic polynomial of $J(\bar c) D^{-1}$ is
  not the zero polynomial. 
\end{proof}

\begin{corollary}
  \label{coro:det_ev_A_nonzero}
  Let $\bar \mu\in\R$ be given. Then
  \begin{enumerate}[{(}i{)}]
  \item\label{item:det_A_nonzero}
    $\det(A(\bar\mu))=0$ if and only if $\bar\mu$ is an eigenvalue
    of~$J(\bar c)D^{-1}$.
  \item\label{item:ev_A_nonzero} If $\bar\mu$ is not an eigenvalue of
    $J(\bar c)D^{-1}$, then all eigenvalues of $A(\bar\mu)$ are
    nonzero.
  \end{enumerate}
\end{corollary}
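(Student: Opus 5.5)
The proof uses the factorization from Lemma~\ref{lemma:det_JD_not_zero}(\ref{item:A_factors}) together with the fact that a square matrix is singular exactly when its determinant vanishes.

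For part (\ref{item:det_A_nonzero}), fix $\bar\mu\in\R$. By Lemma~\ref{lemma:det_JD_not_zero}(\ref{item:A_factors}),
\begin{displaymath}
  \det(A(\bar\mu)) = \det(D)\,\det(J(\bar c)D^{-1}-\bar\mu I_n).
\end{displaymath}
Since $D=\diag(d_1,\ldots,d_n)$ with all $d_i>0$, we have $\det(D)=\prod_i d_i>0$. Hence $\det(A(\bar\mu))=0$ if and only if $\det(J(\bar c)D^{-1}-\bar\mu I_n)=0$. The latter holds precisely when $\bar\mu$ is a root of the characteristic polynomial of $J(\bar c)D^{-1}$, that is, when $\bar\mu$ is an eigenvalue of $J(\bar c)D^{-1}$. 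This is the statement of Lemma~\ref{lemma:det_JD_not_zero}(\ref{item:A_char_poly_JDinv}) read at the point $\bar\mu$.

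For part (\ref{item:ev_A_nonzero}), suppose $\bar\mu$ is not an eigenvalue of $J(\bar c)D^{-1}$. By part (\ref{item:det_A_nonzero}), $\det(A(\bar\mu))\neq 0$. The determinant equals the product of all eigenvalues of $A(\bar\mu)$, counted with algebraic multiplicity over $\C$. Equivalently, $0$ is an eigenvalue of $A(\bar\mu)$ if and only if $\det(A(\bar\mu)-0\cdot I_n)=0$. So no eigenvalue of $A(\bar\mu)$ can be zero.

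There is no real obstacle here. The only points needing care are that $\det(D)$ is strictly positive, so it cannot produce a spurious zero, and that the eigenvalues of $J(\bar c)D^{-1}$ may be complex. The statement only concerns real $\bar\mu$, so "$\bar\mu$ is an eigenvalue" means a real root of the characteristic polynomial, and the equivalence above holds verbatim.
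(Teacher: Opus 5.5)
Your proposal is correct and follows essentially the same route as the paper. Part (i) comes from the factorization in Lemma~\ref{lemma:det_JD_not_zero}(i) together with $\det(D)\neq 0$, and part (ii) follows directly from (i), since a nonzero determinant rules out a zero eigenvalue.
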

\begin{proof}
  (\ref{item:det_A_nonzero}) follows by
  Lemma~\ref{lemma:det_JD_not_zero}(\ref{item:A_factors}) 
  as by assumption~$\det(D){\neq}0$. (\ref{item:ev_A_nonzero}) is a 
  direct consequence of (\ref{item:det_A_nonzero}).
\end{proof}

Because the determinant of a matrix is equal to the product of its
eigenvalues, the sign of $\det(A(\mu))$ can be used as an indicator
for the existence of positive  eigenvalues of the matrix $A(\mu)$: 

\begin{corollary}[$\sign(\det(A(\mu)))$ and positive eigenvalues of
  $A(\mu)$] \mbox{} \\ \vspace{-2ex}
  \label{coro:sign_A_evals}
  \begin{enumerate}[{(}I{)}]
  \item If $\mu$ is such that $\sign(\det(A(\mu)))=(-1)^{n+1}$, then
    \begin{enumerate}[{(}a{)}]
    \item the matrix $A(\mu)=J(\bar c)-\mu D$ has at least one positive
      eigenvalue and
    \item the number of positive eigenvalues of $A(\mu)=J(\bar c)-\mu D$
      is odd.
    \end{enumerate}
  \item If $\mu$ is such that $\sign(\det(A(\mu)))=(-1)^{n}$, then the
    matrix $A(\mu)=J(\bar c)-\mu D$ has either no positive  eigenvalues
    or an even number of such eigenvalues.
  \end{enumerate}
\end{corollary}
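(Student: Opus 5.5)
The plan is to use that $\det(A(\mu))$ is the product of the eigenvalues of the real matrix $A(\mu)=J(\bar c)-\mu D$, counted with algebraic multiplicity. We then track how non-real and real eigenvalues contribute to its sign. Since $A(\mu)$ is real, its characteristic polynomial has real coefficients, so non-real eigenvalues occur in conjugate pairs $\lambda,\bar\lambda$ with equal multiplicities. Each pair contributes $\lambda\bar\lambda=|\lambda|^2>0$ to the product. Hence $\sign(\det(A(\mu)))$ equals the sign of the product of the real eigenvalues.

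Assume first that $\det(A(\mu))\neq 0$, which holds in both cases (I) and (II) since the stated sign is nonzero. Then no eigenvalue is zero, and every real eigenvalue is strictly positive or strictly negative. Let $p$ be the number of positive real eigenvalues and $q$ the number of negative ones, with multiplicity, and let $2m$ be the number of non-real eigenvalues. Then $p+q+2m=n$, and
\begin{displaymath}
  \sign(\det(A(\mu)))=(-1)^{q}=(-1)^{n-p-2m}=(-1)^{n-p}=(-1)^{n+p}.
\end{displaymath}
Here "positive eigenvalue" is read as positive real eigenvalue, which is how the statement should be understood.

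From this identity both parts follow at once. In case (I), $(-1)^{n+p}=(-1)^{n+1}$ forces $p$ to be odd, which gives (b); in particular $p\ge 1$, which gives (a). In case (II), $(-1)^{n+p}=(-1)^{n}$ forces $p$ to be even, possibly zero, which is exactly the claim.

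There is no real obstacle here. The only points needing care are the conjugate-pair argument, which relies on $J(\bar c)$ and $D$ being real, and counting eigenvalues with algebraic multiplicity so that the parity statements are about multiplicities. Corollary~\ref{coro:det_ev_A_nonzero} is not needed, since the nonzero sign hypothesis already excludes zero eigenvalues.
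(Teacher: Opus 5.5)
Your proof is correct and follows the paper's own reasoning. The paper gives no separate proof beyond noting that $\det(A(\mu))$ is the product of the eigenvalues of $A(\mu)$; your argument fills in the steps left implicit there: conjugate pairs contribute $|\lambda|^2>0$, the nonzero sign rules out zero eigenvalues, and counting with algebraic multiplicity gives $\sign(\det(A(\mu)))=(-1)^{n+p}$, including the point that the parity claims require that multiplicity count.
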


\subsection{Conditions for instabilities}
\label{sec:Turing-Condi}
Lemma~\ref{lemma:det_JD_not_zero}(\ref{item:A_not_zero}) 
together with the $n-s$ conserved masses imply that the polynomial
$\det(A(\mu))$ is of the form 
\begin{equation}
  \label{eq:CharPoly_JD}
  \det(A(\mu)) = \det(D)\, \mu^{n-s}\left( a_0 \mu^s + \ldots +
    a_{s-1} \mu + a_s 
  \right)\ .
\end{equation}
If all eigenvalues of $A(\mu)$ have negative real part, then
$\sign(\det(A(\mu))=(-1)^n$. If an odd number of eigenvalues is
positive, then $\sign(\det(A(\mu))=(-1)^{n+1}$. As we do not want that
$A(\mu)$ has positive eigenvalues for all positive values of $\mu$, we
want to have a sign change of the polynomial $\det(A(\mu))$ on the
positive real line. The, arguably, algebraically easiest way to
achieve this is a situation where the sign of the leading coefficient
and of the constant coefficient are different. In this case
$\det(A(\mu))$ will qualitatively look like the cartoons depicted in
Fig.~\ref{fig:poly_cartoon} (or their negative). And for $n$ odd,
values of $\mu$ where $\det(A(\mu))$ is positive corresponds to
$A(\mu)$ having positive eigenvalues (at least one). In case of $n$
even, it is values of $\mu$ where $\det(A(\mu))$ is negative that
correpond to $A(\mu)$ having positive eigenvalues.

\begin{figure}[!htb]
  \centering
  \begin{subfigure}[b]{0.48\textwidth}
    \centering
    \includegraphics[width=\textwidth]{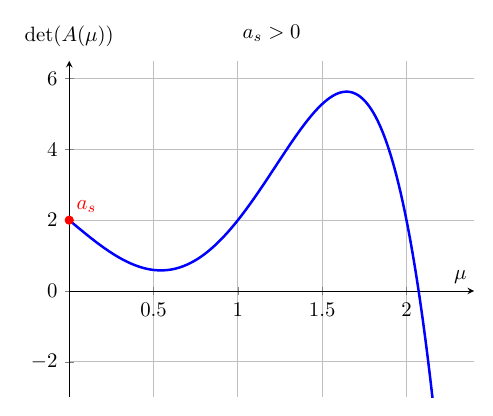}
    \caption{One positive root}
    \label{fig:poly1}
  \end{subfigure}
  \hfill 
  \begin{subfigure}[b]{0.48\textwidth}
    \centering
    \includegraphics[width=\textwidth]{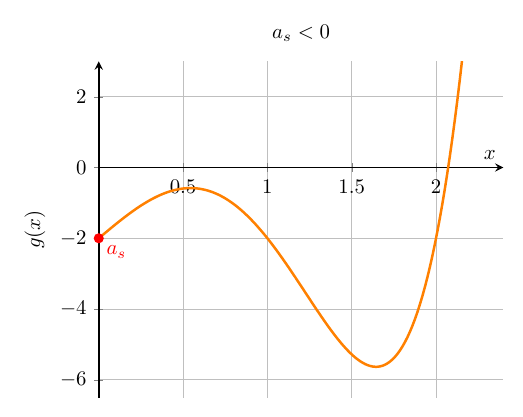}
    \caption{Three positive roots}
    \label{fig:poly2}
  \end{subfigure}
  \caption{
    Examples of polynomials with constant and leading
    coefficient of different sign that satisfy
    conditions~(\ref{eq:condi_a0}) \& (\ref{eq:condi_as}) for $n$ is
    odd ($n$ even, corresponds to the negative of the depicted
    curves).
  }
  \label{fig:poly_cartoon}
\end{figure}

Recall that the coefficients $a_0$, \ldots, $a_s$ are rational
functions of the parameters $k_i$, $d_i$ and $\xi_i$. In the
following, we then assume that there exist parameter values, such that
\begin{subequations}
  \begin{align}
    \label{eq:condi_a0}
    \sign(a_0) &=  (-1)^n\\
    \label{eq:condi_as}
    \sign(a_s) &= (-1)^{n+1}\ ,
  \end{align}
\end{subequations}

In this situation there exists (at least one) positive solution
to $\det(A(\mu))=0$. Let $\bar\mu$ be the smallest positive zero of
$\det(A(\mu))=0$.  In the following theorem we note explicit
conditions on the domain size $|\Om|$ for $\mu_1\in(0,\overline{\mu})$, 
and hence instability of $\oc$, where $\mu_1$ is the lowest positive 
eigenvalue $\mu_1$ of the (positive) Neumann Laplacian $-\Delta$. 
These give guidance for the numerical analysis of the bifurcating
branches in \S\ref{numsec}.

\begin{theorem}\label{theo:existence_mu_k}
  Recall the polynomial $\det(A(\mu))$ of (\ref{eq:CharPoly_JD}) and
  let $k_i$, $d_i$ and $\xi_i$ be such that the coefficients~$a_0$
  and~$a_s$ satisfy~(\ref{eq:condi_a0})~\&~(\ref{eq:condi_as}). As
  above, let~$\bar\mu$ be the smallest positive zero
  of~$\det(A(\mu)){=}0$.
  Assume that $\Omega\subset\R^d$, $d=1,2,3$ is simply connected
  with length, area, or volume $|\Omega|$. In this case there
  exists an eigenvalue~$\mu_{\ell}$ of the operator~$-\Delta$
  on~$\Omega$ such that the corresponding matrix $A_{\ell} =  J(\bar
  c) - \mu_{\ell} D$ has an odd number of positive eigenvalues (and at
  least one), if $|\Omega|$ satisfies
  \begin{align}
    &\text{$d=1$ ($\Omega \subset \R$ an interval)}:
      \label{eq:Omcrit-line}
      |\Omega| > \frac{\pi}{\sqrt{\bar\mu}}; \\
    &\text{$d=2$ ($\Omega\subset\R^2$):\quad}
      \label{eq:Omcrit_2D}
      |\Omega| > p_{1,1}^2 \frac{\pi }{\bar\mu}, 
      \intertext{
      where $p_{1,1}\approx 1.8412$ is first positive zero of the
      first derivative of the first Bessel function~$J_{1}$;
      }
    &\text{$d=3$ ($\Omega\subset\R^3$):}\quad 
      \label{eq:Omcrit_3D}
      |\Omega| > \frac{4}{3} p_{\frac{3}{2},1}^3
      \frac{\pi}{(\bar\mu)^{\frac{3}{2}}},
  \end{align}
  where $p_{\frac{3}{2},1}\approx 2.0816$ is first positive zero
  of the first derivative of $x^{-\frac{1}{2}} J_{\frac{3}{2}}(x)$
  for the Bessel function $J_{\frac{3}{2}}(x)$. 
\end{theorem}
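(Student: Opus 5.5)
The plan has two parts. First, the algebraic conditions (\ref{eq:condi_a0}) and (\ref{eq:condi_as}) produce an interval of $\mu$ on which $A(\mu)$ has an odd number of positive eigenvalues. Second, the domain-size conditions guarantee that the first nonzero Neumann eigenvalue lies in that interval.

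\textbf{Step 1: the sign of $\det(A(\mu))$ on $(0,\bar\mu)$.} Write $\det(A(\mu))=\det(D)\,\mu^{n-s}q(\mu)$ with $q(\mu)=a_0\mu^s+\dots+a_s$, as in (\ref{eq:CharPoly_JD}). For $\mu>0$ the factor $\det(D)\,\mu^{n-s}$ is positive, so $\sign\det(A(\mu))=\sign q(\mu)$. Since $q(0)=a_s$ and $\sign(a_s)=(-1)^{n+1}$ by (\ref{eq:condi_as}), continuity shows that $q$ keeps this sign on $[0,\bar\mu)$: by definition $\bar\mu$ is the smallest positive zero of $\det(A(\mu))$, hence of $q$. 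Condition (\ref{eq:condi_a0}) is used only to guarantee that such a $\bar\mu$ exists, because $q$ has opposite signs at $0$ and at $+\infty$. Corollary~\ref{coro:sign_A_evals}(I) then gives that for every $\mu\in(0,\bar\mu)$ the matrix $A(\mu)=J(\bar c)-\mu D$ has an odd number, and in particular at least one, of positive eigenvalues. It therefore suffices to find a Neumann eigenvalue $\mu_\ell$ with $0<\mu_\ell<\bar\mu$.

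\textbf{Step 2: a positive Neumann eigenvalue below $\bar\mu$.} We take $\mu_1$, the first positive eigenvalue of $-\Delta$ with Neumann conditions on $\Omega$.
\begin{itemize}
\item For $d=1$ and $\Omega$ an interval of length $L$, we have $\mu_1=\pi^2/L^2$. Thus $\mu_1<\bar\mu$ holds exactly when $L>\pi/\sqrt{\bar\mu}$, which is (\ref{eq:Omcrit-line}).
\item For $d=2,3$ we invoke the Szeg\H{o}--Weinberger inequality. Among domains of given area or volume, the ball maximizes $\mu_1$, so $\mu_1(\Omega)\le\mu_1(B)$ with $|B|=|\Omega|$.
\item For a disk of radius $R$, separation of variables gives $\mu_1(B)=p_{1,1}^2/R^2$, where $p_{1,1}$ is the first zero of $J_1'$, and $|B|=\pi R^2$. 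Hence $\mu_1(\Omega)\le\pi p_{1,1}^2/|\Omega|$, and this is $<\bar\mu$ exactly under (\ref{eq:Omcrit_2D}).
\item For a ball in $\R^3$, the first nonradial Neumann mode has radial part $x^{-1/2}J_{3/2}(x)$ (the spherical Bessel function $j_1$). This gives $\mu_1(B)=p_{3/2,1}^2/R^2$ with $|B|=\tfrac43\pi R^3$. Then $\mu_1(\Omega)\le p_{3/2,1}^2\bigl(\tfrac{4\pi}{3|\Omega|}\bigr)^{2/3}$, and the requirement that this be $<\bar\mu$ is equivalent to $|\Omega|>\tfrac43\pi p_{3/2,1}^3\bar\mu^{-3/2}$, which is (\ref{eq:Omcrit_3D}).
\end{itemize}

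\textbf{Main obstacle.} Step 1 is routine. The delicate point is Step 2 in dimensions $2$ and $3$, namely the justification of the isoperimetric bound on $\mu_1$. Szeg\H{o}--Weinberger requires some regularity of $\Omega$ (a Lipschitz or piecewise smooth boundary, as assumed in (\ref{eq:pde})), together with connectedness so that $\mu_0=0$ is simple and $\mu_1>0$. I would cite Szeg\H{o} (for simply connected planar domains) and Weinberger (for general dimension) rather than reprove it. I would also check that the relevant ball eigenfunction is indeed the first nonradial mode, with angular index $1$, rather than a radial mode; this amounts to comparing $p_{1,1}$ with the first zero of $J_0'$, and similarly in 3D. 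Once $0<\mu_1<\bar\mu$ is established, Step 1 applies with $\ell=1$ and the theorem follows.
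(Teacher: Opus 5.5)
Your proposal is correct and follows the paper's proof essentially step for step. First, $\sign\det(A(\mu))=\sign(a_s)=(-1)^{n+1}$ on $(0,\bar\mu)$, so Corollary~\ref{coro:sign_A_evals} applies. Then $0<\mu_1<\bar\mu$ is forced by the explicit interval eigenvalue in 1D, and by the isoperimetric (Szeg\H{o}--Weinberger) comparison with disks and balls in 2D and 3D. Your 1D formula $\mu_1=\pi^2/L^2$ is the right one and is exactly what yields (\ref{eq:Omcrit-line}); the paper's proof writes $\pi/|\Omega|$, which is a typo.
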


\begin{proof} 
  Assumptions (\ref{eq:condi_a0}) and (\ref{eq:condi_as}) imply that
  $\sign(A(\mu))=\sign(a_s)$ on $(0,\bar\mu)$. By
  Corollary~\ref{coro:sign_A_evals} this implies that for every
  $\mu\in(0,\bar\mu)$ the corresponding matrix $A(\mu)= J(\bar c)-\mu D$
  has at least one positive eigenvalue. Hence, if the lowest nontrivial
  eigenvalue $\mu_{1}$ of $-\Delta$ on~$\Omega$ lies in $(0,\bar\mu)$,
  then the corresponding matrix $A_{\ell}=J(\bar c)-\mu_{\ell} D$ has an
  odd number of (and at least one) positive eigenvalues. For $d=1$ we
  have $\mu_1=\pi/|\Om|$, yielding \reff{eq:Omcrit-line}. For $d=2$ and
  $d=3$ the results follow by comparison with disks and balls (with
  again explicit eigenvalues), as by the isoperimetric inequality these
  yield the largest first eigenvalues under given $|\Om|$. In detail,
  for $d=2$ we have $\mu_1 \leq p_{1,1}^2 \frac{\pi}{|\Omega|}$
  \cite[eq.~(1.15)]{Ashbaugh1993}), yielding \reff{eq:Omcrit_2D},
  while $d=3$ yields  $\mu_1 \leq p_{\frac{3}{2},1}^2
  \left(\frac{\frac{4}{3}\pi}{|\Omega|}\right)^{\frac{2}{3}}$
  \cite[eq.~(1.15)]{Ashbaugh1993}, and hence \reff{eq:Omcrit_3D}. 
\end{proof}

If $\bar \mu$ is the only positive root of $\det(A(\mu))=0$ we can 
sharpen Theorem~\ref{theo:existence_mu_k}: 
\begin{corollary}
  In the setting of Theorem~\ref{theo:existence_mu_k}, let $\bar\mu$
  be the only positive root of $\det(A(\mu))=0$. Then conditions
  (\ref{eq:Omcrit-line}) -- (\ref{eq:Omcrit_3D}) are necessary and
  sufficient for the existence of an eigenvalue $\mu_{\ell}$ of $-\Delta$
  on~$\Omega$ such that the corresponding matrix~$A_{\ell}$ has an odd
  number of positive eigenvalues (and at least one).
\end{corollary}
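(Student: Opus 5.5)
The plan is to reduce both directions to the single equivalence
\[
\exists\,\ell:\ A_\ell \text{ has an odd number of positive eigenvalues}
\iff \mu_1<\bar\mu ,
\]
where $\mu_1$ is the lowest positive Neumann eigenvalue. I then translate $\mu_1<\bar\mu$ into the domain-size conditions (\ref{eq:Omcrit-line})--(\ref{eq:Omcrit_3D}). Sufficiency is already contained in Theorem~\ref{theo:existence_mu_k}, so the new content is necessity.

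\textbf{Sign of $\det(A(\mu))$ on the half-line.} Since $\bar\mu$ is the only positive root, the factor $a_0\mu^s+\dots+a_s$ in (\ref{eq:CharPoly_JD}) has constant sign on $(\bar\mu,\infty)$. That sign is $\sign(a_0)=(-1)^n$ by (\ref{eq:condi_a0}). The prefactor $\det(D)\,\mu^{n-s}$ is positive for $\mu>0$. Hence $\sign(\det(A(\mu)))=(-1)^n$ for every $\mu>\bar\mu$. By Corollary~\ref{coro:sign_A_evals}(II), $A(\mu)$ then has an even number (possibly zero) of positive eigenvalues, and never an odd number. At $\mu=\bar\mu$ the determinant vanishes. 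So $A(\bar\mu)$ has an eigenvalue $0$, and the sign argument no longer controls parity. I would avoid this degenerate case by reading the threshold in (\ref{eq:Omcrit-line})--(\ref{eq:Omcrit_3D}) as strict.

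\textbf{The remaining indices.} For $\ell=0$ we have $\mu_0=0$ and $A_0=J(\bar c)$. By the standing (neutral) stability assumption this matrix has no positive eigenvalues. Therefore an index $\ell$ with an odd number of positive eigenvalues can exist only if some $\mu_\ell\in(0,\bar\mu)$. Since $\mu_1$ is the smallest positive Neumann eigenvalue, this holds if and only if $\mu_1<\bar\mu$. Conversely, $\mu_1<\bar\mu$ gives the required index, by the first part of the proof of Theorem~\ref{theo:existence_mu_k}. This proves the equivalence displayed above.

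\textbf{Translating to $|\Omega|$.} For $d=1$ the first eigenvalue is explicit, $\mu_1=\pi^2/|\Omega|^2$. Then $\mu_1<\bar\mu$ is exactly (\ref{eq:Omcrit-line}), since $|\Omega|>\pi/\sqrt{\bar\mu}$; this is the form the condition should take, and the $\mu_1=\pi/|\Om|$ in the proof of Theorem~\ref{theo:existence_mu_k} appears to be a typo. For $d=2,3$ the plan is to use the Szegő–Weinberger bound \cite[eq.~(1.15)]{Ashbaugh1993}. It states $\mu_1(\Omega)\le\mu_1(B)$, where $B$ is the disk or ball with $|B|=|\Omega|$, and equality holds exactly for $B$. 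Writing $\mu_1(B)=p_{1,1}^2\pi/|\Omega|$, respectively $\mu_1(B)=p_{\frac32,1}^2\bigl(\tfrac43\pi/|\Omega|\bigr)^{2/3}$, the inequality $\mu_1(B)<\bar\mu$ is equivalent to (\ref{eq:Omcrit_2D}), respectively (\ref{eq:Omcrit_3D}). Sufficiency for general $\Omega$ follows from $\mu_1(\Omega)\le\mu_1(B)$. Necessity holds as an equivalence on the extremal domains, the disk and the ball, where the bound is attained. This shows the thresholds cannot be lowered.

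\textbf{Main obstacle.} The hard step is necessity for non-round $\Omega$ in $d=2,3$. Szegő–Weinberger only bounds $\mu_1$ from above, and thin domains have small $\mu_1$ at small area. So $\mu_1<\bar\mu$ does not force (\ref{eq:Omcrit_2D}) or (\ref{eq:Omcrit_3D}) for arbitrary shapes. To obtain necessity literally for all $\Omega$, I would first try to recast the conditions as $\mu_1(\Omega)<\bar\mu$, which is shape-independent and equivalent to the corollary's conclusion by the argument above. The $|\Omega|$ form in (\ref{eq:Omcrit_2D})--(\ref{eq:Omcrit_3D}) is then its sharp specialization to disks and balls.
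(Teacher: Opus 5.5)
Your core argument is the paper's. Uniqueness of $\bar\mu$ together with (\ref{eq:condi_a0}) fixes $\sign(\det A(\mu))=(-1)^n$ beyond $\bar\mu$. Corollary~\ref{coro:sign_A_evals} then rules out an odd count there, so $\mu_1\le\mu_\ell\le\bar\mu$, and the paper rearranges as in Theorem~\ref{theo:existence_mu_k}.

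Where you stop short of the statement, you are right and the paper's proof is wrong. Its final appeal to ``the same rearrangements'' uses, for $d=2,3$, the Szeg\H{o}--Weinberger bound $\mu_1(\Omega)\le\mu_1(B)$, which only works in the sufficiency direction. The necessity claim is in fact false for general simply connected $\Omega$. Take the rectangle $\Omega=(0,a)\times(0,b)$ with $a=2\pi/\sqrt{\bar\mu}$ and $b=1/\sqrt{\bar\mu}$. Then $\mu_1=\pi^2/a^2=\bar\mu/4\in(0,\bar\mu)$, so $A_1$ has an odd number of positive eigenvalues. Yet $|\Omega|=2\pi/\bar\mu<p_{1,1}^2\pi/\bar\mu$, since $p_{1,1}^2\approx 3.39$. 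A thin box $(0,a)\times(0,b)^2$ violates (\ref{eq:Omcrit_3D}) in the same way. So the corollary holds as stated only for $d=1$; for $d=2,3$ it holds only on disks and balls, or in your shape-free form $\mu_1(\Omega)<\bar\mu$.

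Your two side corrections are also right, and both fill holes the paper's proof passes over. On an interval $\mu_1=\pi^2/|\Omega|^2$, not $\pi/|\Omega|$, and that is what yields (\ref{eq:Omcrit-line}). The index $\ell=0$ must be excluded via neutral stability of $J(\bar c)$, because $\det A(0)=0$ when $s<n$. The paper's step ``$\mu_1\le\mu_\ell$'' silently assumes $\ell\ge 1$.

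One step remains open in your write-up, as in the paper's: the case $\mu_\ell=\bar\mu$. There $\det A(\bar\mu)=0$, so the determinant says nothing about the parity of the positive eigenvalues of $A(\bar\mu)$. Declaring the threshold strict does not remove this case. It is exactly what decides whether the necessary condition is strict, and your sentence ``can exist only if some $\mu_\ell\in(0,\bar\mu)$'' skips it. The paper has the same slip: it derives $\mu_\ell\le\bar\mu$ and then writes $\mu_1<\bar\mu$. To close it, either show separately that $A(\bar\mu)$ has an even number of positive eigenvalues, or state the necessary condition non-strictly as $\mu_1\le\bar\mu$ (for $d=1$, $|\Omega|\ge\pi/\sqrt{\bar\mu}$).
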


\begin{proof}
  By Theorem~\ref{theo:existence_mu_k} conditions
  (\ref{eq:Omcrit-line}) -- (\ref{eq:Omcrit_3D}) are
  sufficient. To establish necessity let~$\mu_{\ell}$ be an eigenvalue of
  $-\Delta$ on~$\Omega$ such that the corresponding matrix $A_{\ell}$
  has an odd number of positive eigenvalues (and 
  at least one). In this case $\sign(\det(A(\mu_{\ell}))) =
  \sign(a_s)$ by Corollary~\ref{coro:sign_A_evals}. As, by assumption,
  $\bar\mu$ is the only positive root and~$a_0$ and~$a_s$
  satisfy~(\ref{eq:condi_a0}) \& (\ref{eq:condi_as}), it follows that
  $\sign(\det(A(\mu)))=\sign(a_s)$ if and only if $\mu \leq \bar\mu$
  (cf. the general form of the polynomial $\det(A(\mu))$
  in~(\ref{eq:CharPoly_JD})). Hence we conclude $\mu_{\ell} \leq
  \bar\mu$. The properties of the eigenvalues of~$-\Delta$ on~$\Omega$
  are such that $\mu_1 \leq \mu_{\ell}$ (see e.g.\
  \cite[Theorem~11.5.2]{jost2013partial} on the eigenvalues
  of~$-\Delta$). Hence $\mu_1<\bar \mu$ and
  conditions~(\ref{eq:Omcrit-line}) -- (\ref{eq:Omcrit_3D}) follow by 
  the same rearrangements as in the proof of
  Theorem~\ref{theo:existence_mu_k}.
\end{proof}

Based on Theorem~\ref{theo:existence_mu_k}, we can formulate
the following condition concerning Turing-like instabilities:
\begin{corollary}
  \label{coro:cond-turing-inst}
  Suppose there exist $k_i$, $\xi_i$ and $d_i$ such that $a_0$ and
  $a_s$ satisfy~(\ref{eq:condi_a0}) \&
  (\ref{eq:condi_as}). Further suppose that $\Omega$ is one of those
  discussed in Theorem~\ref{theo:existence_mu_k}. If the steady state
  $\bar c$ of the ODE~(\ref{eq:crn_ode}) is such that all
  nonzero eigenvalues of the matrix $J(\bar c)$ have negative real
  part, then Turing-like instabilities occur.
\end{corollary}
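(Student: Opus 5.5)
The plan is to combine the ODE stability hypothesis with Theorem~\ref{theo:existence_mu_k} and the spectral decomposition of the linearized problem~(\ref{lpde}). By definition, a Turing-like instability requires two things: the homogeneous steady state $\bar c$ must be (neutrally) stable for the ODE~(\ref{eq:crn_ode}), and the corresponding constant function $\bar c(\cdot)\equiv\bar c$ must be unstable for the PDE~(\ref{eq:pde}). The first property is exactly the hypothesis on the nonzero eigenvalues of $J(\bar c)$. What remains is to show that the PDE linearization has a positive eigenvalue.

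The first step is to expand a perturbation $w$ in the $L^2(\Omega)$-orthonormal basis of Neumann eigenfunctions $\phi_\ell$ of $-\Delta$, with eigenvalues $0=\mu_0<\mu_1\le\mu_2\le\dots$. Writing $w=\sum_\ell w_\ell(t)\phi_\ell$ with $w_\ell\in\R^n$, the linear PDE~(\ref{lpde}) decouples into the ODEs $\dot w_\ell = A_\ell w_\ell$, where $A_\ell = J(\bar c)-\mu_\ell D$ is as in~(\ref{eq:def_Ak}). The mode $\ell=0$ reproduces the ODE linearization $J(\bar c)$, which is stable by assumption.

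The second step invokes Theorem~\ref{theo:existence_mu_k}. Since $a_0$ and $a_s$ satisfy~(\ref{eq:condi_a0}) \&~(\ref{eq:condi_as}), and $\Omega$ is chosen as in that theorem, there is some $\ell\ge1$ such that $A_\ell$ has an odd number of positive real eigenvalues, and in particular a positive eigenvalue $\lambda>0$ with eigenvector $v$. Then $w(t,z)=e^{\lambda t}v\,\phi_\ell(z)$ is an exponentially growing solution of~(\ref{lpde}) satisfying the Neumann boundary conditions. It is spatially nonhomogeneous because $\phi_\ell$ is nonconstant for $\ell\ge1$. Hence $\bar c$ is linearly unstable in the PDE with respect to inhomogeneous perturbations, while it is stable in the ODE, which is the claimed Turing-like instability.

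The main obstacle is passing from linear instability to nonlinear instability in the $\|\cdot\|_\infty$ sense defined in \S\ref{sec:odes-pdes-MA-network}. I would handle this by appealing to the standard principle of linearized instability for semilinear parabolic systems. The nonlinearity $\Gamma r(k,\cdot)$ is smooth (polynomial), and $D\Delta$ with Neumann conditions generates an analytic semigroup on $C(\bar\Omega)^n$ (or on $L^p$ with $p$ large, embedded into $C$). The spectrum of the linearization $J(\bar c)+D\Delta$ contains the positive eigenvalue $\lambda$. Theorem 5.1.3 of Henry's Geometric Theory of Semilinear Parabolic Equations then yields nonlinear instability of $\bar c$. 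The zero eigenvalues coming from conservation laws do not interfere with this argument, since only one eigenvalue with positive real part is needed.
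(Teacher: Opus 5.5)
Your proposal is correct and follows the paper's own argument. Theorem~\ref{theo:existence_mu_k} supplies an eigenvalue $\mu_\ell>0$ of $-\Delta$ for which $J(\bar c)-\mu_\ell D$ has a positive eigenvalue, and the hypothesis on the nonzero eigenvalues of $J(\bar c)$ supplies the (neutral) ODE stability; together these are exactly the paper's notion of a Turing-like instability. Your explicit growing inhomogeneous mode $e^{\lambda t}v\,\phi_\ell$ and the appeal to Henry's principle of linearized instability go beyond the paper, which stays at the spectral level, and they are sound additions. Taking $X=C(\bar\Omega)^n$ with $\alpha=0$ gives the instability directly in the $\|\cdot\|_\infty$ sense of \S\ref{sec:odes-pdes-MA-network}.
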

\begin{proof}
  By Theorem~\ref{theo:existence_mu_k} there exists at least one
  eigenvalue~$\mu_{\ell}$ of the operator~$-\Delta$ on~$\Omega$, such that
  the corresponding matrix $J(\bar c)-\mu_{\ell} D $ has an odd number of
  (and at  least one) positive  eigenvalues. As by assumption all
  nonzero eigenvalues $J(\bar c)$ have negative real part the
  conditions for Turing-like instabilities described are satisfied. 
\end{proof}

\begin{remark}
  \label{rem:L_nr_ev}
  Let $\Omega\subset\R$ be a line segment with fixed length $L$ and
  assume that $\bar\mu$ is the unique positive zero of
  $\det(A(\mu))=0$ (cf.\ Fig.~\ref{fig:poly2}). Then, under the
  assumptions (\ref{eq:condi_a0}) -- (\ref{eq:condi_as})) one has
  $\sign(\det(A(\mu)))=(-1)^{n+1}$ for $0<\mu<\bar \mu$ 
  and $\det(A(\mu))=(-1)^n$ for $\mu>\bar\mu$. Since the eigenvalues
  of $-\Delta$ on $\Omega$ are given by $\mu_{\ell} = \left(\frac{\ell
      \pi}{L}\right)^2$, we can then determine $L$ such that an
  arbitrary number of eigenvalues $\mu_{\ell}$ is such that
  $\det(A(\mu_{\ell}))>0$ by rearranging the above formula. For each
  index~$\ell$ with  
  \begin{displaymath}
    1 \leq \ell \leq \left\lfloor \frac{\sqrt{\bar \mu} L}{\pi}
    \right\rfloor 
  \end{displaymath}
  one has $\det(A(\mu_{\ell}))>0$ and the corresponding matrix~$J(\bar
  c)-\mu_{\ell} D$ has thus at least one positive eigenvalue (and
  always an odd number of such eigenvalues).
\end{remark}

\subsection{Application to the double phosphorylation 
mechanism}\label{sec:Turing_dd}

We now apply the above analysis to the network~(\ref{mapk}). We
evaluate the Jacobian of the right hand side of
(\ref{eq:ma_ode_1})--(\ref{eq:ma_ode_9}) at~(\ref{eq:mono_para}) to
obtain~$J(\bar c)$ and compute $\det(J(\bar c) D^{-1} -\mu I_9)$. This
yields a degree nine polynomial in~$\mu$ of the form 
\begin{displaymath}
  \mu^3(a_0 \mu^6 + \dots + a_5 \mu + a_6) =0,
\end{displaymath}
where the $a_i$ are rational functions of the $k_i$, $d_i$ and
$\xi_i$. All terms in the common denominator of the $a_i$ have
positive sign, hence the common denominator of the $a_i$ is 
positive for positive values of $k_i$, $d_i$ and
$\xi_i$. Consequently, in light of
Theorem~\ref{theo:existence_mu_k} the signs of the numerators 
of the coefficients $a_0$ and $a_6$ are of interest. We note the
following facts:
\newcounter{fact}
\begin{enumerate}[{Fact~}1{:}]
\item\label{item:fact_1}
  All terms in the numerator of $a_0$ have negative sign, hence
  $a_0$ is negative for all positive values of the $k_i$, $d_i$ and
  $\xi_i$.
\item The numerator of $a_6$ as a polynomial in $\xi_1$ is of degree
  two (in~$\xi_1$) that factors as 
  \begin{displaymath}
    \text{Numerator}(a_6) = \left(d_{3} d_{8} k_{12} k_{6}-d_{5} d_{9}
      k_{3} k_{9}\right) \gamma_0 \xi_1^2 + \gamma_1 \xi_1 + \gamma_2,
  \end{displaymath}
  where $\gamma_0$, $\gamma_1$ and $\gamma_2$ are polynomials in the 
  $k_i$, $d_i$ and $\xi_2$ and $\xi_3$.
\item The coefficient~$\gamma_2$ contains only negative terms,
  $\gamma_1$ contains positive and negative terms and~$\gamma_0$
  contains only positive terms.
  \setcounter{fact}{\value{enumi}}
\end{enumerate}
To further discuss $a_6$ as a function of $\xi_1$ we use the symbol
$a_6(\xi_1)$. Note that for arbitrary positive values of the $k_i$ and
$d_i$ that satisfy~(\ref{eq:d_k_ineq}) and arbitrary positive $\xi_2$,
$\xi_3$ the signs of the terms in $\gamma_0$ and $\gamma_2$ imply that
$a_6(\xi_1)$ is a quadratic polynomial in~$\xi_1$ with negative
constant coefficient. Thus, we observe: 
\begin{enumerate}[{Fact~}1{:}]
  \setcounter{enumi}{\value{fact}}
\item\label{item:fact_4} If
  \begin{equation}
    \label{eq:d_k_ineq}
    d_{3} d_{8} k_{12} k_{6}-d_{5} d_{9} k_{3} k_{9} >0,
  \end{equation}
  then there exist a unique positive value $\bar \xi_1$ such that
  $a_6(\bar\xi_1)=0$.
\item\label{item:fact_5} This unique positive zero $\bar \xi_1$ of
  $a_6(\xi_1)$ is such that
  \begin{displaymath}
    a_6(\xi_1) < 0\text{ for $0 < \xi_1< \bar\xi_1$ and } a_6(\xi_1) >
    0\text{ for $\xi_1>\bar\xi_1$}\ .
  \end{displaymath}
  \setcounter{fact}{\value{enumi}}
\end{enumerate}
This leads to the final conclusion:
\begin{enumerate}[{Fact~}1{:}]
  \setcounter{enumi}{\value{fact}}
\item\label{item:fact_6} If $k_i$, $d_i$ satisfy~(\ref{eq:d_k_ineq})
  and if $\xi_1 > \bar \xi_1$, then the coefficients $a_0$ and $a_6$
  of $\det(J(\bar c) - \mu D)$ satisfy the conditions
  (\ref{eq:condi_a0}) \& (\ref{eq:condi_as}) and we are in the
  situation of Theorem~\ref{theo:existence_mu_k}.
\end{enumerate}

\begin{lemma}[Eigenvalues $\mu_{\ell}$ of $-\Delta$ and 
  positive eigenvalues of~$J(\bar c) - \mu_{\ell} D$
  for appropriate $\Omega$]
  \label{lem:eigenvalues_Jc_muD_dd}
  Pick $k_i$ and $d_i$ such that
  \begin{equation}
    \label{eq:1}
    \frac{k_3 k_9}{k_6 k_{12}} < \frac{d_3 d_8}{d_5 d_9}\ .
  \end{equation}
  Fix $\xi_2>0$ and $\xi_3>0$, let $\bar\xi_1$ be the
  unique positive solution to $a_6(\xi_1)=0$ and pick
  $\xi_1>\bar\xi_1$. Determine $\bar c$ according to
  (\ref{eq:mono_para}). \\ 
  If $\Omega$ is as in Theorem~\ref{theo:existence_mu_k}, then the
  operator $-\Delta$ has an eigenvalue $\mu_{\ell}$ such that the
  corresponding matrix~$J(\bar c)-\mu_{\ell} D$ has an odd number of
  positive eigenvalues (and at least one).
\end{lemma}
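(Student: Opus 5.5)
The plan is to reduce the lemma to Theorem~\ref{theo:existence_mu_k} by checking the sign conditions (\ref{eq:condi_a0}) \& (\ref{eq:condi_as}) for the network~(\ref{mapk}), using the facts collected above. First I note that (\ref{eq:1}) is equivalent to (\ref{eq:d_k_ineq}). Multiplying $\frac{k_3 k_9}{k_6 k_{12}} < \frac{d_3 d_8}{d_5 d_9}$ by the positive quantity $k_6k_{12}d_5d_9$ gives $d_5 d_9 k_3 k_9 < d_3 d_8 k_6 k_{12}$, which is exactly (\ref{eq:d_k_ineq}). Hence the leading coefficient of the quadratic $\xi_1\mapsto \mathrm{Numerator}(a_6)$ is positive, since $\gamma_0>0$ by Fact~3.

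Next I justify the existence and uniqueness of $\bar\xi_1$ (Facts~\ref{item:fact_4}, \ref{item:fact_5}). For fixed positive $k_i,d_i,\xi_2,\xi_3$, the numerator of $a_6$ is a real quadratic in $\xi_1$ with positive leading coefficient and negative constant term $\gamma_2$. Its two roots therefore have product $\gamma_2/(\text{leading coeff.})<0$, so there is exactly one positive root $\bar\xi_1$. The numerator is negative on $(0,\bar\xi_1)$ and positive for $\xi_1>\bar\xi_1$. Since the common denominator of the $a_i$ is positive, $a_6$ has the same sign as its numerator, so $a_6>0$ for the chosen $\xi_1>\bar\xi_1$. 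Here $n=9$, so $(-1)^{n+1}=1$ and (\ref{eq:condi_as}) holds. By Fact~\ref{item:fact_1}, $a_0<0=\,$, i.e.\ $\sign(a_0)=-1=(-1)^9$, so (\ref{eq:condi_a0}) holds as well.

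It remains to check that the setup of (\ref{eq:CharPoly_JD}) applies. The point $\bar c$ given by (\ref{eq:mono_para}) with positive $\xi$ is a positive steady state. We have $\rank(J(\bar c))=s=6$, reflecting the three conservation laws $Z'$; the factorization $\mu^3(a_0\mu^6+\dots+a_6)$ displays this. Because $a_0$ and $a_6$ have opposite signs, the degree-six factor has a positive zero, and $\bar\mu$, its smallest positive zero, is well defined. Theorem~\ref{theo:existence_mu_k} then yields, for $\Omega$ satisfying (\ref{eq:Omcrit-line})--(\ref{eq:Omcrit_3D}), an eigenvalue $\mu_\ell$ of $-\Delta$ such that $J(\bar c)-\mu_\ell D$ has an odd number (at least one) of positive eigenvalues. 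Multiplying by $\det(D)>0$, as in Lemma~\ref{lemma:det_JD_not_zero}, does not affect any signs.

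The main obstacle is the reliance on Facts~1--3, which are symbolic computer-algebra outputs: all terms of the numerator of $a_0$ are negative, the factorization of the numerator of $a_6$ holds, and $\gamma_0$ and $\gamma_2$ have definite signs. I would take these as established by the computation preceding the lemma. The only remaining care is to confirm that the common denominator is positive, so that signs pass from the numerators to the $a_i$ themselves.
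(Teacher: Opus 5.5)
Your proposal is correct and follows the paper's proof: you rewrite (\ref{eq:1}) as (\ref{eq:d_k_ineq}), use Facts~1--6 to obtain $\sign(a_0)=-1=(-1)^9$ and $\sign(a_6)=+1=(-1)^{10}$ for $\xi_1>\bar\xi_1$, and then invoke Theorem~\ref{theo:existence_mu_k}. Beyond the paper, you justify Facts~4--5 by noting that the product of the roots, $\gamma_2/\bigl((d_3d_8k_{12}k_6-d_5d_9k_3k_9)\gamma_0\bigr)$, is negative, which the paper simply asserts; the stray ``$=$'' in ``$a_0<0=$'' is only a typo.
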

\begin{proof}
  Inequality~(\ref{eq:1}) is equivalent to~(\ref{eq:d_k_ineq}) and as
  explained in Fact~\ref{item:fact_1} -- \ref{item:fact_6} this
  implies that the $k_i$, $d_i$ and $\xi_i$ are such that the
  coefficients $a_0$ and $a_6$ of $\det(J(\bar c)D^{-1}-\mu I_6)$
  satisfy the assumptions~(\ref{eq:condi_a0}) \&
  (\ref{eq:condi_as}). Hence, by Theorem~\ref{theo:existence_mu_k} the
  domain~$\Omega$ can be 
  chosen such that there exists an eigenvalue $\mu_{\ell}$ of the
  operator~$-\Delta$ such that the matrix $J(\bar c)-\mu_{\ell} D$ has
  and odd number of positive eigenvalues (and at least one).
\end{proof}

The ODEs derived from network~(\ref{mapk}) can admit
multistationarity~\cite{Conradi2008}, that is, there may be at least
two positive steady states for some values of the total amounts. We
expand on this in the following remark.

\begin{remark}\label{msrem}
[Multistationarity in the ODE system of
  network~(\ref{mapk})]
  As discussed in \S\ref{sec:odes-pdes-MA-network}, in the ODE
  setting the solutions $c(t)$ are confined to the affine subspaces
  $Z^{'} c = Z^{'}c_0$ (where $c(0)=c_0$), i.e., with masses $m=m(c_0)$. 
  This has important implications when talking about the number of
  steady states: Given a monomial parametrization, there exists an
  infinite number of steady states.
  But given an initial condition $c_0>0$ only a limited number of
  steady states is of interest, those with $m(\overline{c})=m_0$.
  
  More formally, let $\bar c(\xi)$ be the steady state parametrization
  (suppressing the $k_i$ dependency for notational simplicity) and let
  $m_0$ be given. Then only those elements of $\bar c(\xi)$ contained
  in the affine subspace $Z^{'} c= Z^{'} c_0$ are relevant for the
  solution $c(t)$ with $c(0) = c_0$. These are defined by the positive
  solutions of
  \begin{displaymath}
    Z^{'} \bar c(\xi) = Z^{'} c_0\ . 
  \end{displaymath}
  If there are $\geq 2$ such positive solutions, then one speaks of
  multistationarity, and if $\ge 2$ such steady states are locally
  dynamically stable, then of multistability. In~\cite{maya-bistab} it
  has been shown, that if the rate constants of network~(\ref{mapk})
  satisfy 
  \begin{equation}
    \label{mcon}
    k_{3} k_{9} - k_{12} k_{6}<0,
  \end{equation}
  then there exists $\bar \xi_1$ (for every fixed positive $\bar
  \xi_2$ and $\bar \xi_3$) such that the equation 
  \begin{displaymath}
    Z^{'} \bar c(\xi) = Z^{'} \bar c (\bar \xi)
  \end{displaymath}
  has three positive
  solutions~\cite[Theorem~5.1\&Corollary~5.1]{maya-bistab}. (Here the 
  steady state $\bar c (\bar \xi)$ takes on the role of the vector
  $c_0$  in defining the affine subspace.) 
  In the numerical examples in \S\ref{numsec} we see the
  multistationarity condition \reff{mcon} approximately, as we do not
  aim for sharp $\bar\xi_1$, see Figs \ref{f1} and \ref{f1b}, and
  Footnote \ref{mcfoot}.
\end{remark}

\section{Numerical examples}\label{numsec}

\subsection{General remarks and setup}\label{numisec}
We use the numerical continuation and bifurcation package 
\pdep\ \cite{p2pbook,p2phome} to first continue steady states 
and some time-periodic orbits (POs) of 
the reaction--diffusion PDE \reff{eq:pde}, i.e., 
\begin{gather}
  \label{rd1}
  \pa_t c=G(c):=f(c)+D\Delta c,\quad f(c)=\Gamma r(k,c),\quad 
  D=\diag(d_1,\ldots,d_9),  
\end{gather}
on domains $\Om=(-l,l)\subset\R$ (1D interval) or 
$\Om_R$ (disk of radius $R$) with Neumann BCs. Subsequently we will also 
run some numerical time integration, aka direct numerical simulation (DNS). 
We throughout fix the ``base parameter set'' 
\begin{subequations}
  \label{bp1}
  \begin{align}
    &\text{$(d_1,d_2,d_3,d_4,d_5,d_6,d_7,d_8,d_9)=
      (0.1, 0.3, 2, 0.4, 0.5, 0.02, 0.8, 0.5, 0.5)$, and}\\
    &\text{$(k_1,k_2,k_4,k_5,k_6,k_7,k_8,k_9,k_{10},k_{11},k_{12})
      =(1, 0.3, 4.2, 1.6, 1, 0.1, 2.2, 0.5, 0.5, 0.8, 1)$, }
  \end{align}
\end{subequations}
and use $k_3>0$ as a bifurcation parameter, and $\overline{c}(\xi)$ 
with $(\xi_1,\xi_2,\xi_3)=(1,1,2)$ as a starting point for the continuation. 
Additionally, we use \reff{bp1} with $k_9$ changed to 
\begin{align}
  \label{bp1b}
  \text{$k_9=1$,} 
\end{align}
because in this case the primary loss of stability of $\oc$ 
is due to Hopf bifurcations.

The steady state continuation and bifurcation requires to 
explicitly implement as constraints the mass conservations 
\begin{align}
  \label{mc1}
  \ddt m_i=0,
\end{align}
$m_1=\spr{c_2+c_3+c_5},\ m_2=\spr{c_7+c_8+c_9}, \ 
m_3=\spr{c_1+c_3+c_4+c_5+c_6+c_8+c_9}$, where 
$\spr{c}=\frac 1 {|\Om|}\int_\Om c\dd x$, 
because for numerical continuation and bifurcation we must get 
rid of the three 0 eigenvalues associated to \reff{mc1} 
(see \cite[\S3.5]{p2pbook} for general background on continuous 
symmetries and constraints in \pdep). 
For this, there are basically two options: Either we choose 
\begin{align}
  \label{q1}
  q(c):=\bpm q_1(c)\\q_2(c)\\q_3(c)\epm:=
  \bpm \spr{c_2+c_3+c_5}-m_1(k_3)\\
  \spr{c_7+c_8+c_9}-m_2(k_3)\\
  \spr{c_1+c_3+c_4+c_5+c_6+c_8+c_9}-m_3(k_3)\epm=\bpm 0\\0\\0\epm, 
\end{align}
with $m_j(k_3)$ given from the monomial parametrization (MP) 
\reff{eq:mono_para} of that specific homogeneous branch, or 
\begin{align}
  \label{q2}
  q(c):=\bpm q_1(c)\\q_2(c)\\q_3(c)\epm:=\bpm \spr{c_2+c_3+c_5}-m_1\\
  \spr{c_7+c_8+c_9}-m_2\\
  \spr{c_1+c_3+c_4+c_5+c_6+c_8+c_9}-m_3\epm=\bpm 0\\ 0\\0\epm, 
\end{align}
with $m=(m_1,m_2,m_3)$ set at initialization. The MP variant 
is close to the analysis and explicitly based on the 
homogeneous branch \reff{eq:mono_para}, and only allows 
bifurcations from this with $m=m(k_3)$. We use this in Figures \ref{f1} and 
\ref{f1b}. The second variant 
can be considered as constraints given by initial conditions 
(of the continuation), and hence 
yields bifurcation diagrams (in $k_3$ or any other parameter) 
at fixed $m$ throughout. For brevity we call this a ``natural 
parametrization'' (NP), and use this in Figures \ref{f1c} and \ref{f2a}.

In both cases, to implement the constraints \reff{q1} or \reff{q2} 
we introduce three Lagrange--multipliers $\eps_1, \eps_2, \eps_3$ and 
obtain the modification 
\begin{align}
  \wt f(c)=(f_1{+}\eps_3,f_2{+}\eps_1,f_3{+}\eps_1{+}\eps_3,f_4{+}\eps_3,f_5{+}\eps_1{+}\eps_3,f_6{+}\eps_3,f_7{+}\eps_2,f_8{+}\eps_2{+}\eps_3, f_9{+}\eps_2{+}\eps_3), 
\end{align}
which we directly rename to $f$. When solving $G(c)=0$ with \reff{q1} or 
\reff{q2} with the new $f$ 
we then want the $\eps_i$ to stay zero (numerically $\CO(10^{-12})$, say), 
which they do. 

When considering \reff{rd1} on a disk, we have a 
continuous rotational symmetry (see also Remark \ref{pcrem}): 
In polar coordinates $x=r(\cos\phi,\sin\phi)$, if $c(r,\phi)$ 
is a steady state, so is $c(r,\phi+\vt)$ for any $\vt\in\R$. 
Hence, if $\pa_\phi c\not\equiv 0$, then $\pa_\phi c$ is in the 
kernel $N(\pa_c G)$ of $\pa_c G$, yielding a fourth 0 eigenvalue. 
To remove this, we go into a co--rotating frame $\vt=\phi-st$, 
which modifies \reff{rd1} to 
\begin{align}
  \label{rd2}
  \pa_t c=G(c)+s\pa_\phi c, 
\end{align}
with the speed $s$ a priori unknown. To continue solutions of \reff{rd2}, 
additional to $k_3$ we  ``free'' the parameter $s$ and then need to 
add the rotational phase condition (PC) 
\begin{align}
  \label{cs} 
  \spr{\pa_\phi u,u_0}=0, 
\end{align}
where $u_0$ is a reference solution, usually from the previous 
continuation step.%
\footnote{More formally, this makes the continuation orthogonal to the 
group orbit of rotations, again see \cite[\S3.5]{p2pbook} for general 
background, or \cite[Part II]{p2pbook} or \cite{symtutb} for simple examples.}
\footnote{See also \cite{SI} for the \mlab\ sources 
of our implementation, together with a reference sheet 
and further plots.} 
Steady states of \reff{rd2} with $s=0$ are genuine 
steady states of \reff{rd1}, and 
steady states of \reff{rd2} with $s\ne 0$ (so called 
relative equilibria) correspond to states rotating in 
\reff{rd1} with angular speed $s$, so called rotating waves 
(RWs), and as we shall see these RWs play a prominent role 
in \reff{rd1} over disk domains. 

\begin{remark}
  \label{pcrem}
  We collect a number of Remarks on the spectra 
  and eigenfunctions of 
  $\pa_c G(\overline{c})$ on intervals and disks with Neumann BCs 
  as already used in Theorem \ref{theo:existence_mu_k}, and on the 
  ``symmetry perspective'' \cite{gs85,GS2002,Hoyle} 
  on bifurcations from homogeneous steady states $\overline{c}$. 

  a) Over $\R$, the eigenfunctions 
  of $\pa_c G$ have the form of {\em Fourier modes}  
  $V(x;\kap)=\er^{\ri \kap x}\Vh(\kap)$ where $\kap\in\R$ is called the 
  wave number, and $\Vh(\kap)\in\C^9$ is an eigenvector of 
  $A(\kap):=J(\oc)-\kap^2 D$. For any $\kap\in\R$,  $A(\kap)\in\R^{9\times 9}$ 
  has $9$ (counted with multiplicity) 
  eigenvalues $\mu_1(\kap),\ldots,\mu_9(\kap)\in\C$, and we 
  define the curves $\kap\mapsto \mu_j(\kap)$ by sorting wrt 
  decreasing real parts. We can then numerically compute 
  and plot the {\em dispersion relations} (DRs), 
  i.e., the leading eigenvalue curves $\kap\mapsto \mu_j(\kap)$, 
  $j=1,2,3$, say, as in Fig.\ref{f1}(a,b) and Fig.\ref{f1b}(a,b). 
  
  Over bounded intervals with Neumann BCs, i.e., $\Om=(-l,l)$, only 
  wave numbers $\kap=\kap_m=m\pi/(2l)$ are allowed. 
  Moreover, \reff{rd1} is 
  $Z_2$ equivariant, i.e., invariant under $x\mapsto -x$. Therefore: 
  \begin{compactitem}
  \item All steady state BPs from $\oc$, i.e., $\mu_j(\kap_m)=0$ for 
    some $j$, are generically simple, with kernels of the form 
    $V(x)=\sin(m \pi x/(2l))\Vh_m$, 
    $\Vh_m\in\R^9$, $m\in\N$ odd, or 
    $V(x)=\cos(m \pi x/(2l))\Vh_m$, $m\in\N$ even. For $m\ne 0$ 
    the bifurcating branches are necessarily pitchforks due 
    to the ``hidden symmetry'' of shifting the solution by half 
    a spatial period. However, for 
    $m=0$ we shall find a transcritical bifurcation of a second 
    homogeneous branch $\tilde{\oc}$ from $\oc$, at ``small'' $k_3$. 
    This should be expected from the multistationarity condition 
    \reff{mcon}, see Footnote \ref{mcfoot}. 
  \item We have Hopf bifurcation points if a pair of complex 
    conjugate eigenvalues $\mu_j=\overline{\mu_{j+1}}$ with $\im(\mu_j)\ne 0$ 
    crosses the imaginary axis, see for instance Fig.\ref{f1b}(a,b). 
    In 1D with NBCs, this yields the pitchfork bifurcations of 
    so called standing waves, i.e., time periodic orbits with 
    discrete points $x_j$ where $c(t,x_j)$ oscillates with 
    maximal amplitude. See Fig.\ref{f1b} for examples, where 
    however we only briefly look at such SWs 
    because their computation is quite demanding numerically, 
    and the SWs obtained are all unstable, possibly with some exception in a 
    narrow parameter regime. Instead we will focus on RWs in 
    2D, see Fig.\ref{f2a}. The analog of RWs in 1D would be 
    traveling waves (TWs), which for instance would bifurcate 
    for periodic boundary conditions together with the SWs, but here 
    the TWs are prohibited by the Neumann BCs. 
  \end{compactitem}
  
b) The eigenfunctions of the (scalar) Neumann Laplacian on disks 
$D_R$ of radius $R$ are 
\begin{align}
  \label{dnl}
  v(r,\phi)=b(r)\er^{\ri m\phi},\quad m=0,1,\ldots, 
\end{align}
where $b(r)=J_m(s_{mn}'r/R)$ with $J_m$ the $m$th Bessel function 
of the first kind and $s_{mn}'=n$--th zero of $J_m'(r)$, and 
$m$ is called azimuthal wave number. We thus label 
the eigenfunctions as $v_{mn}(r,\phi)$, with eigenvalues 
$\mu_{mn}=(s_{mn}'/R)^2$, cf.~Theorem \ref{theo:existence_mu_k}. 
For the (complexified) 
vector valued linearized problem $G_c(\oc)V=\mu V$, $V\in L^2(\Om,\C^9)$, 
the eigenfunctions have the form 
$$V(r,\phi)=v_{mn}(r,\phi)\Vh_{mn} \text{ with }\Vh_{mn}\in \C^9.
$$ 
The rotational invariance of \reff{rd1} on disks 
yields that the problem is $O(2)$ equivariant, i.e., 
invariant under $\phi\mapsto -\phi$ and under 
$\phi\mapsto \phi+\vt$, $\vt\in\R$. Therefore: 
\begin{compactitem}
\item Steady state BPs with kernels $N(\pa_c G(\oc))$ 
with angular dependency ($m\ne 0$) come with multiplicity at least 
two, i.e., $v_{mn}(r,\phi)\Vh_{mn}$ and 
$v_{mn}(r,\phi+\pi/2)\Vh_{mn}$ are two linearly independent eigenfunctions. 
Moreover, all bifurcations of this type must be pitchforks due 
to the ``hidden'' symmetry $\phi\mapsto \phi+\pi/2$. For $m=0$ 
(``Mexican hats'') 
the BPs are generically simple, and we expect transcritical bifurcations 
of such branches. 
\item For Hopf bifurcations with $m\ne 0$ we automatically have 
three bifurcating branches:  clockwise ($s<0$ in \reff{rd2}) and 
counterclockwise ($s>0$ in \reff{rd2}) RWs, RW$_+$ and RW$_-$ for 
short, and again SWs as 
equal amplitude superpositions of RW$_{\pm}$. 
We refrain from computing SWs in 2D, but in \S\ref{2dsec} we 
compute branches of 2D RWs. 
\end{compactitem}
\end{remark}
\begin{remark}
  \label{lwrem} 
a) 
Inspection of the DRs in Fig.\ref{f1}(a,b), and of similar 
DRs in various other parameter regimes, yields that the 
instabilities of $\oc$ wrt modes $V(x;\kap)$ with $\kap\ne 0$ 
is of {\em long wave} (or {\em sideband}) type, as analyzed in 
Theorem \ref{theo:existence_mu_k}. 
 That means that 
the instability sets in by a change of curvature 
of the critical eigenvalue curve $\kap\mapsto \mu_1(\kap)$ 
at $\kap=0$. We have $\mu_1(0)=0$ (and $\mu_2(0)=\mu_3(0)=0$ 
for all parameters due to the mass conservations), 
$\mu_j'(0)=0$ 
for $j=1,\ldots,9$ due to the reflection symmetry $x\mapsto -x$, 
and $\re\mu_j(\kap)\sim -\kap^2$ for $\kap\to\infty$ and $j=1,\ldots,9$. 
Moreover $\re \mu_1''(0)<0$ for $k_3>k_{3,c}\approx 4.49$, 
and $\re \mu_1''(0)>0$ for $k_3<k_{3,c}$, yielding $\re \mu_1(\kap)>0$ in 
a small interval $(0,\kap_0)$.   

In this situation, over finite intervals $(-l,l)$, the first mode 
to become unstable always is the one with minimal wave number 
$\kap_1=\pi/(2\ell)$, followed by the second mode $\kap_2=\pi/\ell$, 
and so on. Therefore, these instabilities are called Turing--like, 
while genuine Turing instabilities are of finite wave number type, 
i.e., the critical wave number $\kap_c$ (defined over $\R$) 
is of order 1. 

b) Similar remarks also apply to the 2D case, where analogs of
Fig.\ref{f1}(a,b)  
can also be computed, yielding (punctured) disks 
$\{\kap=(\kap_1,\kap_2): |\kap|<\kap_0, \kap\ne 0\}$ of unstable 
wave vectors, respectively for finite domains 
discretization of these, for instance according to \reff{dnl}, 
and over disks we then find the order of instabilities as 
$(m,n)=(1,1), (2,1), (0,1), \ldots$. 

c) Examples of genuine Turing instabilities ($\kap_c{=}\CO(1)$) 
and Turing--Hopf instabilities ($\re\mu_1(\kap_c){=}0$ and 
$\im\mu_1(\kap_c)\ne 0$) in systems 
with conserved quantities can be found in, e.g., 
\cite{MC00} (for the conserved Swift--Hohenberg equation as a model 
problem), and \cite{HF18,YFB22,cylpre}. In order to find genuine 
Turing instabilities for \reff{rd1}, 
besides using the base parameter set \reff{bp1} we numerically computed 
DRs for various different parameter combinations, in particular 
varying the diffusion constants from (\ref{bp1}b). This was 
mainly based on intuition, trying to find  activator and 
inhibitor groups as in \cite{SMM00}, but without success, i.e., 
in all cases the instabilities were of long wave sideband type. 
Thus, finding genuine Turing instabilities for \reff{rd1} 
(and possibly associated wave instabilities with $\re\mu_1(\kap_c)=0$ 
and $\im\mu_1(\kap_c)\ne 0$) remains an interesting open problem. 
\end{remark}

\begin{figure}[ht]
  \centering
  \includegraphics[width=0.4\linewidth]{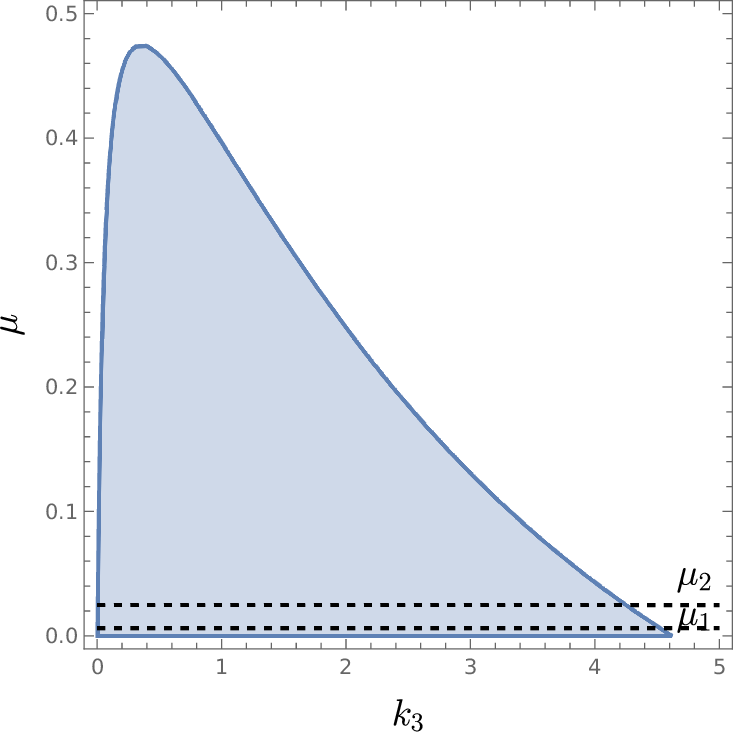}
  \caption{\label{fig:detA_pos}
    Shaded region: values of 
    $k_3$ and $\mu$ with $\det(A(\mu))>0$. 
  }
\end{figure}

\subsection{Numerical evaluation of the analytical instability 
predictions} 
For the base parameter set \reff{bp1}, 
the conditions (\ref{eq:1}) and~(\ref{eq:kc_Km_d_ineq}) evaluate to 
$ \frac{k_3 k_9}{k_6 k_{12}} = \frac{k_3}2 \text{ and } \frac{d_3 d_8}{d_5
    d_9} = 4$, i.e.,  $2 <  k_3 < 8$, 
and for \reff{bp1b} to $1<  k_3 < 4$. 
In light of Facts \ref{item:fact_4}~and~\ref{item:fact_5} on page 
\pageref{item:fact_4} we investigate 
$\det(A(\mu))$ and the coefficient $a_6$, namely 
\begin{equation}
  \label{eq:detA_hannes}
  \begin{split}
   \det(A)(\mu) &= C\mu^3(a_0\mu^6+\ldots+a_5\mu+a_6)\\
&=\frac{\mu^3}{(k_3+0.3)^2} \bigg[
    -\left(0.000048 k_3^2+0.0000288 k_3+4.32\cdot 10^{-6}\right) \mu^6 \\
    &\qquad
    - \left(0.000024 k_3^3+0.01113 k_3^2+0.00402467 k_3+0.000206352\right) \mu^5 \\
    &\qquad 
    - \left(0.00539418 k_3^3+0.716149 k_3^2+0.105762 k_3+0.00356709\right) \mu^4\\
    &\qquad
    -\left(0.339791 k_3^3+9.28858 k_3^2+1.02013 k_3+0.0268889\right) \mu^3\\
    &\qquad
    -\left(4.4146 k_3^3+33.5369 k_3^2+3.46057 k_3+0.0846663\right) \mu^2 \\
    & \qquad
    -\left(15.4912 k_3^3+16.2052 k_3^2+2.95065 k_3+0.0910211\right) \mu \\
    &\qquad
    -5.6007 k_3^3+25.6708 k_3^2+0.922778 k_3-0.00472212
    \biggr]. 
  \end{split}
\end{equation}
For positive $k_3$, all coefficients are negative, except
\begin{displaymath}
  a_6(k_3) =       -5.6007 k_3^3+25.6708 k_3^2+0.922778 k_3-0.00472212
\end{displaymath}
which is positive for $0.00454356 < k_3 < 4.61914$. 
As $a_0$ is negative for all positive values
of~$k_3$, the coefficients of the polynomial~(\ref{eq:detA_hannes})
satisfy the assumption~(\ref{eq:condi_a0})~\&~(\ref{eq:condi_as}) for
values of $k_3$ in the above interval. We can thus apply
Corollary~\ref{coro:det_ev_A_nonzero} and
Theorem~\ref{theo:existence_mu_k}. By Corollary~\ref{coro:sign_A_evals} 
eigenvalues $\mu_{\ell}$ of~$-\Delta$ such that $\det(A(\mu_{\ell}))>0$, 
yield instabilities, and in summary 
we find that $\det(A(\mu))>0$ for the values depicted in 
Fig.~\ref{fig:detA_pos}. For $\Om=(-l,l)$ with $l=20$ as in 
\S\ref{1dsec}, $\mu_1=\pi^2/(2l)^2\approx 0.0062$, 
$\mu_2\approx 0.0247$, and inspecting the associated lines 
in Fig.\ref{fig:detA_pos} indeed yields good predictions for the $k_3$ 
values for the bifurcations of these modes from $\oc$. 

\subsection{1D}\label{1dsec}
Fig.\ref{f1}(a,b) show DRs (cf.~Remark \ref{pcrem}a)) 
for linearizations of \reff{rd1} around the homogeneous 
steady states $\oc(k_3)$ from \reff{eq:mono_para}, 
with parameters from \reff{bp1}. These show 
that the primary instability of $\oc(k_3)$ is at some $k_3=k_{3c}\in(4,4.5)$ 
($k_{3c}\approx 4.48$) 
and is of steady long wave type. Even though the 
eigenvalues $\mu$ with small $|\Re(\mu)|$ are in general 
complex, the instability onset  
is for real eigenvalues, as illustrated in (b), and 
altogether there are no Hopf bifurcations at all from $\oc$ 
in this regime.  
(This is different in Fig.\ref{f1b} for $k_9=1$, where there are 
several Hopf bifurcations from $\oc$, including the primary loss 
of stability).

\begin{figure}[ht]
    \includegraphics[width=0.9\textwidth]{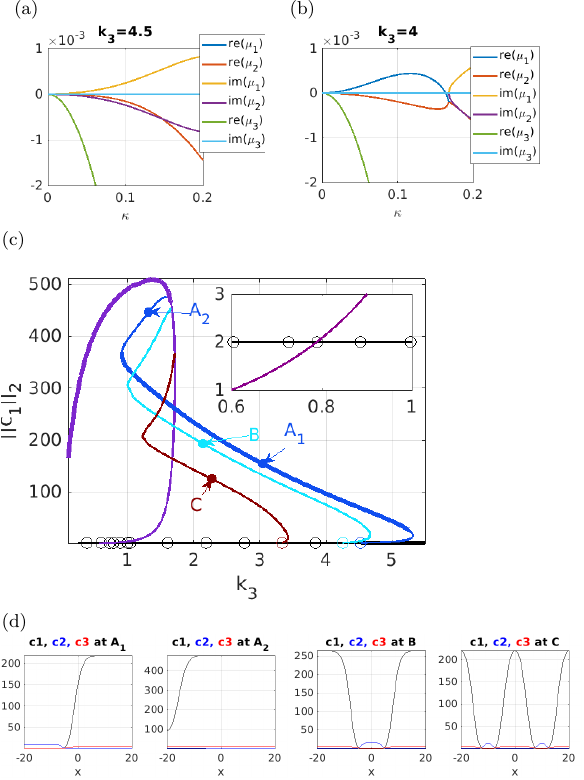}
\caption{{\small Parameters \reff{bp1}, 
constraints $m=m(k_3)$ based on \reff{eq:mono_para} (MP setting), 
continuation 
in $k_3$. (a,b) Dispersion relations before and after 
spatial instability of $\oc(k_3)$. (c,d) Basic bifurcation 
diagram, and sample solutions. \label{f1}}}
\end{figure}

Fig.\ref{f1}(c) shows a basic BD for \reff{rd1}, \reff{q1} on 
$\Om=(-20,20)$ with Neumann BCs%
\footnote{we refer to Remark \ref{numrem} and \cite{SI} for details 
on the spatial discretization}, where 
\begin{gather}
  \label{c1n}
  \|c_1\|_2:=\left(\frac 1 {|\Om|}\int_\Om c_1^2(x)\dd x
  \right)^{1/2}. 
\end{gather}
This is not a norm, as it uses only the first 
component of $c\in L^2(\Om,\R^9)$, but motivated by the fact that 
$c_1(k_3)=\xi_1$ independent of $k_3$ in the MP setting. 
Similarly, in the sample plots in (d) we restrict to the 
first three components of $c$, and refer to \cite{SI} for 
plotting other components. 

Thicker lines in (c) indicate stable (parts of) branches, and 
 branch points from the black branch $k_3\mapsto \oc(k_3)$ with 
$\|c_1\|_2\equiv \xi_1=2$ are indicated by $\circ$. We show 
the 1st (``{\tt A}'' branch, blue), 
2nd (``{\tt B}'' branch, light blue), and 4th (``{\tt C}'' branch, brown) 
patterned branches bifurcating from $\oc$. All these bifurcations are 
subcritical pitchforks (meaning that they initially 
go into direction of ``more stable'' $\oc$, i.e., towards larger $k_3$), 
with a turning point (fold) shortly after bifurcation.  
In particular, the primary patterned branch {\tt A} is unstable at bifurcation 
but stabilizes shortly after its right fold. 
Additionally we have 
a second homogeneous branch $\wt{\oc}$ (violet) bifurcating transcritically 
(see inset) from $\oc$ at $k_3\approx 0.8$, which for 
initially increasing $k_3$ shows a fold at $k_3\approx 1.9$ 
and then stabilizes after reconnection of the {\tt A} branch 
at $k_3\approx 1.8$.%
\footnote{\label{mcfoot}This behavior agrees with 
the multistationarity condition \reff{mcon}. For 
the parameter set \reff{bp1}, 
$k_{3\text{mult}}:{=}k_6k_{12}/k_9{=}2$, 
and we should expect the bifurcation of the $\wt{\oc}$ branch 
at some $k_3<k_{3\text{mult}}$, 
and $\wt{\oc}$ to extend to roughly  $k_{3\text{mult}}$, but with  in general 
the fold at some $k_{3f}<k_{3\text{mult}}$; this is because we do 
not vary $\xi_1$ here, i.e., do not search for the ``optimal'' 
$\bar\xi_1$ to yield multistationarity for $k_3$ near $k_{3\text{mult}}$.}

Similarly, also the other patterned branches connect $\oc$ and $\wt{\oc}$. 
Thus, besides the multistationarity of $\oc$ and various patterned 
branches for $k_3<k_{3,c}$, we also have  multistationarity of two 
homogeneous branches for $k_3<1.9$. Moreover, we have different 
multistabilities, namely of $\oc$ and {\tt A} for $k_3>k_{3c}$ and 
of $\wt{\oc}$  
and {\tt A} for $k_3\in(0.9,1.8)$ (left fold of A and reconnection of A to 
$\wt{\oc}$). 

In Fig.\ref{f1b} we illustrate that besides the steady state bifurcations 
as in Fig.\ref{f1}, \reff{rd1} can also show Hopf bifurcations. 
Namely, for $k_9=1$, the primary  instability of $\oc$ is at $k_3\approx 2.82$ 
and is of long wave Hopf type, with small imaginary parts, see (a). 
Subsequent bifurcations from $\oc$ then also include the 
steady long wave type, see (b) for the DR at smaller $k_3$. 
In the main BD in (c) we focus on steady branches, which we plot with 
the same colors as in Fig.\ref{f1}, and which behave completely analogous, 
and thus we omit samples here. The inset of (c) shows in more detail 
the behavior at loss of stability of $\oc$, including 
the first SW branch (orange, samples in (d)), and a ``breather'' branch 
(green, samples in (f)) 
which bifurcates from the A branch yielding gain of stability of A.  Here, 
\begin{gather}
  \label{c1nT} 
  \|c_1\|_2:=\left(\frac 1 {|\Om|T}\int_0^T\int_\Om c_1^2(t,x)\dd x\dd t
  \right)^{1/2}
\end{gather}
generalizes \reff{c1n} to $T$ periodic solutions. 
The behavior of $T$ along the SW and the 
breather branches is shown in (e). 
The SW branch again bifurcates subcritically (to 
increasing $k_3$), and along the branch, $T$ first decreases and 
then increases again. 
At the maxima of the amplitudes 
(at $x=\pm l$) the oscillations quickly develop into a rather strong 
relaxation type, which makes the continuation of the SW branch to large 
amplitude numerically expensive due to the fine $t$ discretization 
of up to 200 points needed, and the same holds for the breather branch.

\begin{figure}[ht]
  \includegraphics[width=0.9\textwidth]{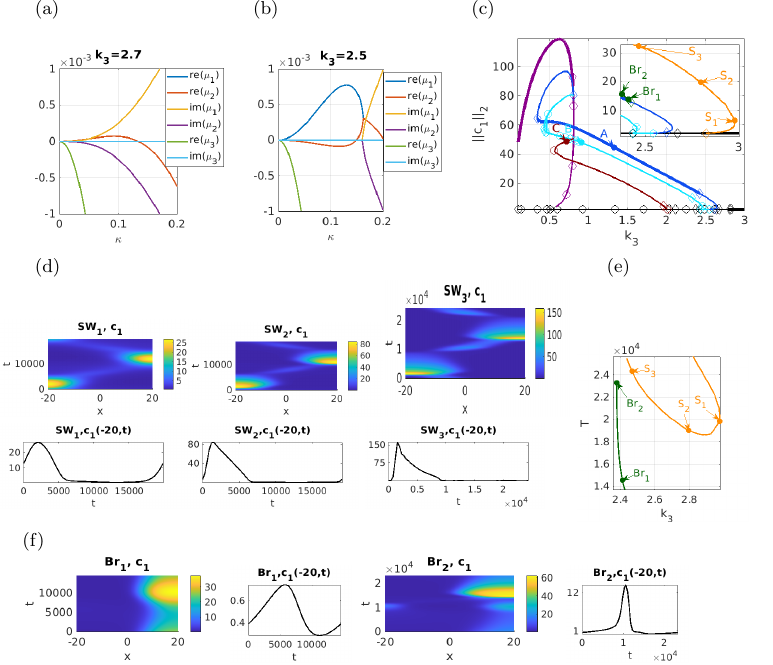}
  \vspace{-3mm}
\caption{{\small Like Fig.\ref{f1} but with $k_9=1$. 
Loss of stability of $\oc(k_3)$ now via Hopf bifurcations, $\diamond$ symbols. 
(a) DRs after first two Hopf bifurcations; (b) after first steady bifurcation. 
(c) BD; the zoom inset also shows the first SW branch (orange), 
and a breather branch (green). (d) space--time plots 
(top) of $c_1$ for the three SWs marked in (c), and behavior of $c_1(t)$ 
at the left boundary.  (e) behavior of period $T$ along Br and SW branches. 
(f) Samples from the breather branch. 
\label{f1b}}}
\end{figure}

\begin{remark}\label{numrem}
  a) For the computation of Fig.~\ref{f1}(c) and 
  Fig.\ref{f1b}(c) we use 
  piecewise linear finite elements in space with a rather moderate 
  discretization of 100 mesh points, because of the rather fine 
  meshes in $t$ of up to 200 points needed for the computation of SWs. 
  Due to the 9 components of $c$, this yields $9\times 100\times 200=180000$ 
  degrees of freedom (DoF) altogether,  and the continuation of the SW 
  and breather 
  branches in Fig.~\ref{f1b}(c) needs about 1h on a laptop computer. 
  In contrast, the computation of all the steady branches in 
  Fig.\ref{f1}(c) and Fig.\ref{f1b}(c) only needs a few minutes. 
  Moreover, we checked that all the steady state results remain unchanged 
  for double the resolution, i.e., 200 spatial points, hence 1800 spatial DoF. 
  
  b) There are further 
  Hopf points on the spatially inhomogeneous branches, and the bifurcating 
  branches generally lead to ``breathing peaks''. However, none of these 
  appear to be stable, as discussed via DNS next, and therefore we restrict 
  to the green branch, which is also the most interesting as its bifurcation 
  leads to the gain of stability of the A branch.  
\end{remark}

\begin{figure}[ht]
  \includegraphics[width=0.9\textwidth]{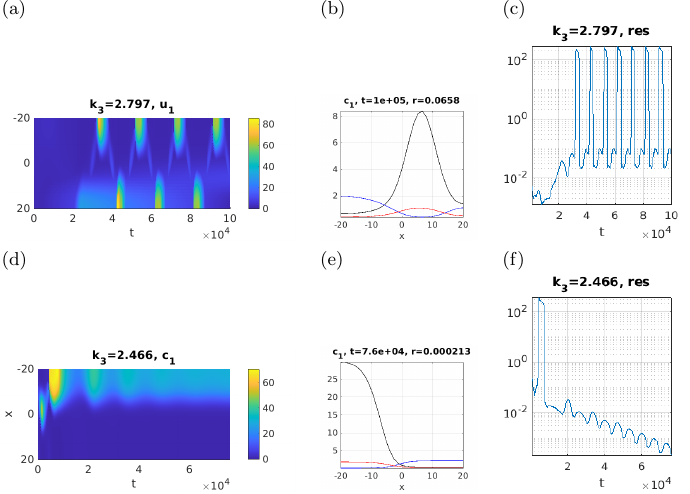}
  \vspace{-3mm}
\caption{{\small DNS for perturbations of SW$_2$ (a--c) and SW$_3$ (d--e).  
\label{f1d}}}
\end{figure}

The stability of SWs (of time periodic orbits in general) can be analyzed 
via their Floquet multipliers, which also determine the bifurcations {\em 
from} time periodic orbits such as period doubling or torus bifurcations, 
 \cite[\S3.4]{p2pbook}. However, the numerical 
computation of the Floquet multipliers 
is a difficult and expensive task and often prone to 
numerical instabilities, in particular for relaxation oscillation type 
of solutions as in Fig.\ref{f1b}. Therefore, to illustrate (in)stability of 
the SWs, in Fig.~\ref{f1d} we run time integration (aka direct numerical 
simulation DNS) from (perturbations of) 
selected SWs. 
That means we let $\tilde c(x)=c_{{\rm SW}}(0,x)$ (time $t=0$ slice) 
and add $0.1\cos(\pi x/20)$ to the first component of $\tilde c$ 
(which does not change the masses $m$), to obtain a perturbed 
initial condition for the DNS. 

Fig.\ref{f1d}(a-c) shows 
the result of doing so for SW$_2$ (see \cite{SI} for the DNS method 
used, i.e., linearly implicit Euler). (a) shows a space--time plot 
of $c_1$, (b) a snapshot of $c_1$ (and $c_2$ and $c_3$) at the stopping time, 
and (c) the ``residual'' $\|G(t)\|_\infty$, i.e., the 
amplitude of the dynamics. This again shows the relaxation oscillation 
behavior as $\|G(t)\|_\infty$ quickly shoots up when a peak develops 
at either the left or right boundary. In any case, 
from (a--c) we see that $c(t,\cdot)$ goes back to SW$_2$, indicating 
stability of SW$_2$, although this is just {\em one} initial 
perturbation. 
(d--f) show the results for perturbing SW$_3$. Here the solution 
converges to the steady front A (where some influence of 
damped complex eigenvalues can be seen in the oscillatory behavior 
of $\|G(t)\|_\infty$ in (e)), yielding instability of SW$_3$. 
Additionally we remark that also SW$_1$ is unstable, as expected 
from the subcritical bifurcation of the SW branch, as small perturbations 
yield convergence to $\oc$ here (not shown). 
Thus, in summary we find that we have a rather small 
range $k_3\in(k_{3a},k_{3b})$ where the SWs are (possibly) 
stable, $2.47<k_{3a}<k_{3b}<3$. The breathers are unstable, as they 
bifurcate subcritically at the gain of stability of the A branch.

In Fig.\ref{f1c} we consider the ``natural parametrization'' (NP) setting 
of \reff{rd2}, i.e., again using \reff{bp1} we now 
combine \reff{rd2} with \reff{q2} initialized 
at $k_3=5.5$, instead of \reff{q1} in Fig.\ref{f1}. Thus, while 
in Fig.~\ref{f1} $m\in\R^3$ changes with $k_3$ as determined 
by \reff{eq:mono_para}, in Fig.\ref{f1c}  all 
branches (and solutions) shown are at fixed $m\approx (12.4, 25, 640)$. 
The differences are not significant at large $k_3$ ($k_3>3$, say) 
but interesting changes happen at smaller $k_3$: 
\begin{compactitem}
\item The transcritical intersection of $\oc$ and $\wt{\oc}$ at $k_3\approx 
0.8$ in Fig.\ref{f1} now becomes imperfect. That means that the homogeneous 
branch $\hat{\oc}$ initially approximately follows the $\oc$ branch, 
but then continuously transitions (as one branch, without bifurcation) 
to approximately the former $\tilde{\oc}$ branch.  
Such breakups of transcritical bifurcations into imperfect bifurcations 
(separate branches which only ``almost intersect'') 
are well known in normal--form theory of bifurcations, see, e.g., 
\cite[\S2.5]{p2pbook}, and the references therein. Note that the black 
homogeneous branch $\hat{\oc}$ in Fig.\ref{f2b} only shows one half 
of the recombination under NP 
of the $\oc$ and $\wt{\oc}$ branches from MP. 
\item At small $k_3$, the $\hat{\oc}$ branch continues to grow 
(in $\|c_1\|_2$), 
 while the MP $\wt{\oc}$ decreased. 
\end{compactitem}
\begin{figure}[h]
  \includegraphics[width=0.9\textwidth]{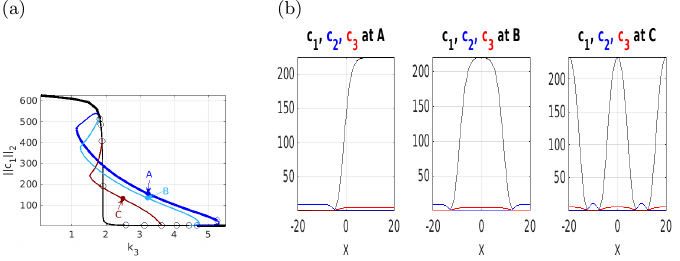}
  \vspace{-3mm}
\caption{{\small The NP setting, $\xi=(2,1,1)$ fixed, $k_9=0.5$, 
continuation in $k_3$, constraints \reff{q2} with $m$ initialized at $k_3=5.5$. 
Breakup of the $\oc$ --- $\wt{\oc}$  bifurcation from Fig.\ref{f1} 
to an imperfect bifurcation. 
\label{f1c}}}
\end{figure}

Similar transitions to imperfection also occur 
in other parameter regimes (e.g., $k_9=1$ as in Fig.\ref{f1b}) when 
going from MP to NP. However, in summary we find both effects of 
going from MP to NP 
{\em not particularly significant} 
for the problem from a pattern formation perspective; --they mainly 
illustrate that the BDs naturally change with the choice of constraints. 
On the other hand, the occurrence of Hopf bifurcations when going 
from $k_9=0.5$ in Fig.\ref{f1} to $k_9=1$ in Fig.\ref{f1b} and Fig.\ref{f1d} is 
significant, and in \S\ref{2dsec} we further explore this in 2D.

\subsection{2D}\label{2dsec}
Fig.\ref{f2a} shows a basic 
BD for \reff{rd1} on $D_{R}$=disk of radius $R$, here $R=10$, 
in the NP setting, using again \reff{bp1} but with \reff{bp1b}, i.e., 
$k_9=1$, to also discuss  Hopf bifurcations again and in particular 
to compute RWs. 
The black branch $\oc$ of homogeneous steady states is hence the analog of the 
black branch from Fig.\ref{f1c}, and the primary patterned steady 
branches belong to Bessel functions 
$v_{mn}$ with $(m,n)=(1,1)$ for  {\tt A}, $(m,n)=(1,2)$ for {\tt B},  
and $(m,n)=(0,1)$ for {\tt C} (Mexican hat). In particular, the {\tt A} 
and {\tt B} branches bifurcate as (subcritical) pitchforks, while 
the {\tt C} branch bifurcates transcritically. All of these 
reconnect to $\oc$ at small $k_3$ and large $\|c_1\|_2$, and 
$\oc$ regains stability at the reconnection of the {\tt A} branch. 

\begin{figure}[h]
  \includegraphics[width=0.9\textwidth]{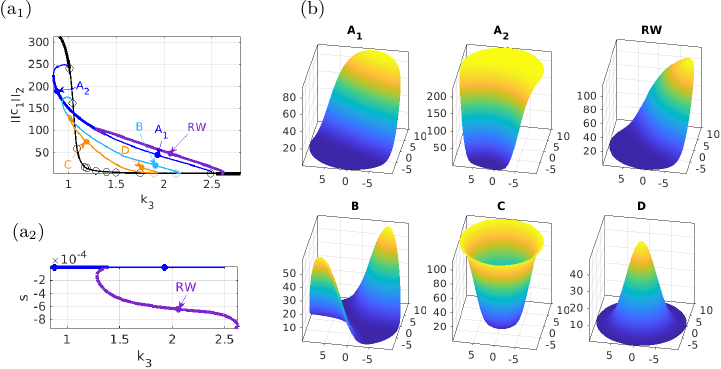}
  \vspace{-2mm}
\caption{{\small \reff{rd1},\reff{q2} over $D_{10}$, base parameters \reff{bp1} 
with $k_9=1$. (a) BD of basic steady states 
(A, dark blue; B, light blue; and C,D, orange), and one RW branch (violet);  
$\|c_1\|_2$ over $k_3$ in (a$_1$) and behavior of speed $s$ along RW branch 
in (a$_2$), showing a drift bifurcation of RW from A at $k_3\approx 1.4$.  
Samples in (b). 
\label{f2a}}}
\end{figure} 

Like in Fig.\ref{f1b}, the loss of stability of $\oc$ 
is  due to a Hopf bifurcation with 
a pair of complex conjugate eigenvalues crossing,  
with rather small 
imaginary parts $\im\mu_{1,2}(k_{3,c})\approx \pm 0.0008$. 
By the $O(2)$ equivariance of \reff{rd2} on disks, cf.Remark \ref{pcrem}b), 
we then have three bifurcating branches of POs: 
RW$_{\pm}$, and SWs. Since the numerical continuation of SWs is very 
expensive and since from experience  we do not expect SWs to be 
stable except possibly in narrow parameter regimes (cf.Remark \ref{numrem},   
and Fig.\ref{f1b}), here we focus on RWs, which can cheaply be computed as 
relative equilibria by going in a co--rotating frame with speed $s$, 
cf.\reff{rd2} and the PC \reff{cs}, with initial 
rotation speed $|s|=|\im\mu_{1}(k_{3,c})|$ at bifurcation. 
Thus continuing in $(k_3,s)$ we obtain Fig.\ref{f2a}(a$_2$) (for a clockwise 
RW, i.e., negative $s$), showing that $|s|$ decreases along the {\tt RW}  
branch, which reconnects to the {\tt A} branch with $s=0$ around 
$k_3=1.4$. Conversely, the bifurcation of {\tt RW} from {\tt A} 
is called a {\em drift bifurcation}, i.e., a steady 
bifurcation (real eigenvalues crossing the imaginary axis) leading 
to a (symmetry induced) drifting with speed increasing away from onset.  

\begin{figure}[h]
  \includegraphics[width=0.9\textwidth]{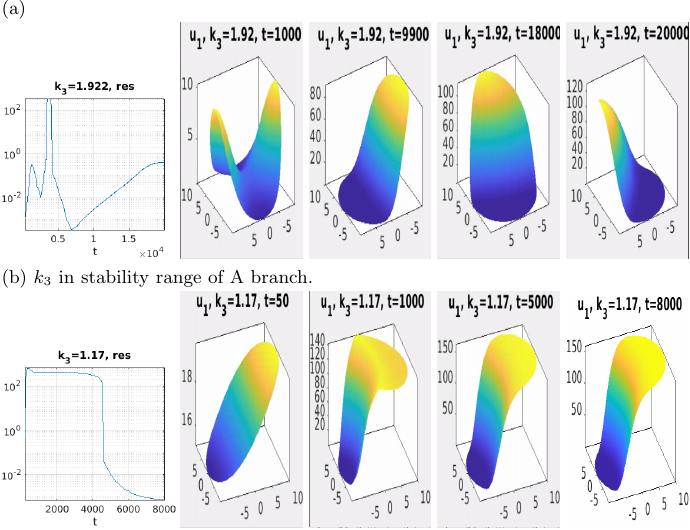}
  \vspace{-3mm}
\caption{{\small DNS for \reff{rd1} over $D_{10}$ with ICs as 
perturbations of $\oc$ at given $k_3$. (a) $k\approx 1.1922$; 
initial approach to A solution, then convergence to RW. 
(b) $k\approx 1.17$;  convergence to A.  
\label{f2b}}}
\end{figure}

Importantly, the RWs are linearly (i.e., spectrally) 
stable between the right and 
left fold of the RW branch, while the A branch only gains stability 
at the drift bifurcation of the RWs. This is confirmed by DNS in 
Fig.\ref{f2b}, also illustrating 
rather large domains of attraction of the respective stable solutions. 
In (a), with $k_3=1.922$, we choose  
the initial condition $c|_{t=0}=\oc$ with 
$0.05\cos(2\pi x/10)+C_1$ added to the first component, where 
$C_1$ is chosen such that $m_3$  does not change (and hence all of $m$, cause 
$m_1,m_2$ are not affected anyway). This yields an initial approach 
to the (unstable) A branch, but then emergence of a rotating waves. 
This can best be seen by the fact that 
the residual goes to a nonzero constant. 
In (b), at $k_1=1.17$, we choose 
$c|_{t=0}=\oc$ with $2\sin(\pi x/20)$ (to shorten transients) added 
to the first component, 
and obtain convergence to the A branch. 

\section{Discussion}\label{dsec} 

Traditionally, deriving instability conditions for
networks with a large number of species is mathematically
challenging. By restricting our focus to systems that admit a
monomial parametrization of their positive steady states, the search
for instabilities by analyzing the signs of the leading and constant
coefficients of the linearized system's characteristic polynomial
becomes feasible. This approach yields sufficient conditions for
instability that take the form of simple polynomial inequalities in
the reaction rate constants and diffusion coefficients. Furthermore,
we translate these algebraic conditions into conditions on the spatial 
domain size $|\Omega|$ for one-, two-, and three-dimensional spaces.

We apply this framework to a nine-species network modeling the
sequential and distributive double (de-)phosphorylation of a protein,  
and show that if the catalytic constants and the diffusion
coefficients of the enzyme-substrate complexes satisfy condition
(\ref{eq:kc_Km_d_ineq}), then the homogeneous steady state is
destabilized, resulting in the emergence of spatial patterns on
appropriately sized domains. This result highlights the practical
utility of our method in extracting conditions from complex
biochemical models.

To validate and extend these analytical predictions, we perform
numerical continuation and direct numerical simulation using
the \pdep\ package. The numerical analysis confirms our theoretical
predictions. On a 1D domain, the primary loss of stability is
confirmed to be of a steady long-wave type, giving rise to
subcritical pitchfork bifurcations. However, the numerical
exploration also reveals complex dynamics beyond the scope of our
determinant-based analysis. By adjusting the rate constant $k_9$, the
primary instability transitions to a Hopf bifurcation. In 1D, this
yields time-periodic standing waves and breathers exhibiting
relaxation oscillations, while on a 2D disk, it leads to drift
bifurcations and the emergence of stable rotating waves.

While our approach provides a starting point for finding sufficient
conditions for pattern formation in the network~(\ref{mapk}), it has
inherent limitations. The instabilities identified analytically are
only Turing-{\em like}; they represent long-wave (or sideband)
instabilities where the first instability is always with respect to
nonhomogeneous solutions of the maximal wavelength allowed by the
finite domain. The existence of genuine Turing instabilities --
characterized by an intrinsic critical wave number independent of the
domain size -- remains an open problem for this specific PDE system,
despite extensive numerical searches across different parameter
combinations. Furthermore, because our analytical method relies on
the product of eigenvalues, it cannot detect the Hopf bifurcations
observed numerically. Thus, while our analytical conditions serve as a 
guide for identifying steady-state pattern formation, future
theoretical work will be required to capture the full spectrum of
spatio-temporal dynamics of the system~(\ref{mapk}), including genuine
Turing instabilities.

\bibliographystyle{amsplain}
\bibliography{turing_crn}

\end{document}